\newtheorem{theorem}{Theorem}[section]
\newtheorem{lemma}[theorem]{Lemma}
\newtheorem{cor}[theorem]{Corollary}
\theoremstyle{definition}
\newtheorem{definition}[theorem]{Definition}
\theoremstyle{remark}
\numberwithin{equation}{section}
\begin{document}

\title[Classification of low-dimensional subdivision rules]{Classification of subdivision rules for geometric groups of low dimension}


\author{Brian Rushton}
\address{Department of Mathematics, Temple
University, Philadelphia, PA 19122, USA}
\email{brian.rushton@temple.edu}

\date{08/07/2014}

\begin{abstract}
Subdivision rules create sequences of nested cell structures on CW-complexes, and they frequently arise from groups. In this paper, we develop several tools for classifying subdivision rules. We give a criterion for a subdivision rule to represent a Gromov hyperbolic space, and show that a subdivision rule for a hyperbolic group determines the Gromov boundary. We give a criterion for a subdivision rule to represent a Euclidean space of dimension less than 4. We also show that Nil and Sol geometries can not be modeled by subdivision rules. We use these tools and previous theorems to classify the geometry of subdivision rules for low-dimensional geometric groups by the combinatorial properties of their subdivision rules.
\end{abstract}

\maketitle
\section{Introduction}\label{Introduction}
Finite subdivision rules provide a relatively new technique in geometric group theory for studying the quasi-isometry properties of groups \cite{cannon2001finite,hyperbolic}. A finite subdivision rule is a way of recursively defining an infinite sequence of nested coverings of a cell complex, usually a sphere. The sequence of coverings can be converted into a graph called the history graph (similar to the history complex described in p. 30 of \cite{cannon1999conformal}). We typically construct history graphs that are quasi-isometric to a Cayley graph of a group. The quasi-isometry properties of the group are then determined by the combinatorial properties of the subdivision rule.

Cannon and others first studied subdivision rules in relation to hyperbolic 3-manifold groups \cite{hyperbolic}. Cannon, Floyd, Parry, and Swenson found necessary and sufficient combinatorial conditions for a two-dimensional subdivision rule to represent a hyperbolic 3-manifold group (see Theorem 2.3.1 of \cite{hyperbolic}, the main theorem of \cite{conformal}, and Theorem 8.2 of \cite{rich}).

This paper expands those results by finding conditions for subdivision rules to represent other important geometries, including all those of dimension 3 or less.

Our main results are the following (some terms will be defined later):

\newtheorem*{HyperHausdorff}{Theorem \ref{HyperHausdorff}}
\begin{HyperHausdorff}
Let $R$ be a subdivision rule, and let $X$ be a finite $R$-complex. If $R$ is hyperbolic with respect to $X$, then the history graph $\Gamma=\Gamma(R,X)$ is Gromov hyperbolic.
\end{HyperHausdorff}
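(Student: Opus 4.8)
The plan is to exploit the layered structure of $\Gamma=\Gamma(R,X)$. Each vertex $v$ of $\Gamma$ sits at a well-defined \emph{level} $\ell(v)=n$, corresponding to a cell of $R^{n}(X)$; the edges split into \emph{vertical} edges, joining a cell to one of the cells it is subdivided into and changing $\ell$ by exactly $1$, and \emph{horizontal} edges, joining distinct cells of a common $R^{n}(X)$ that share a face and leaving $\ell$ unchanged. Since $R$ is a finite subdivision rule and $X$ a finite $R$-complex, $\Gamma$ is connected and has bounded valence, and for $v$ with $\ell(v)=n\ge m$ there is a unique ancestor $\pi_{m}(v)$ in $R^{m}(X)$. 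Write $\delta_{m}(u,v)$ for the distance between $\pi_{m}(u)$ and $\pi_{m}(v)$ in the $1$-skeleton of $R^{m}(X)$. I intend to use the hypothesis ``$R$ is hyperbolic with respect to $X$'' in the form of an \emph{exponential collapse estimate}: there are constants $\lambda>1$ and $C$ so that the shadow on $R^{m}(X)$ of a cell of $R^{n}(X)$ shrinks by a factor $\lambda^{-1}$ each time $n-m$ grows by a bounded amount, equivalently, distinct adjacent cells of $R^{m}(X)$ sharing a level-$m'$ ancestor have $m-m'\le C$. If the definition set up earlier in the paper is phrased as an isoperimetric or expansion inequality instead, the first task is to translate it into this collapse form with constants uniform in the level.

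The core of the proof is then a coarse distance formula: uniformly in all vertices $u,v$,
\[
 d_{\Gamma}(u,v)\;\asymp\;|\ell(u)-\ell(v)|\;+\;\log_{\lambda}\!\bigl(1+\delta_{m}(u,v)\bigr),\qquad m=\min\bigl\{\ell(u),\ell(v)\bigr\}
\]
(possibly with a $\max(0,\cdot)$ or a cap at $m$ that I would pin down; note $\log_{\lambda}\delta_{m}\le m+O(1)$ automatically). For the upper bound I construct the explicit ``down–across–up'' path: descend vertically from $u$ and from $v$ to level $m$, then descend both further until the ancestors coincide (or become adjacent), which by the collapse estimate costs only $\log_{\lambda}(1+\delta_{m}(u,v))+O(1)$ extra levels. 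For the lower bound I project an arbitrary $\Gamma$-path to the sequence of its level-$m$ ancestors: a vertical step moves that ancestor not at all, while a horizontal step at level $k\ge m$ moves it a distance $O(\lambda^{-(k-m)})$ in $R^{m}(X)^{(1)}$, and bounded valence keeps the implied constants independent of $n$; summing the geometric series bounds $\delta_{m}(u,v)$ in terms of the path length. This says precisely that $\Gamma$ is quasi-isometric to a ``combinatorial cone'' over the inverse system $\{R^{m}(X)\}$ — the mirror image, with top and bottom reversed, of a Groves--Manning combinatorial horoball.

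From the distance formula, Gromov hyperbolicity follows by an argument that parallels the hyperbolicity of combinatorial horoballs and of the history complex of \cite{cannon1999conformal}. Given a geodesic triangle in $\Gamma$, each side decreases in level to a minimal value and then increases; the formula forces the three lowest vertices to lie within uniformly bounded distance, each side to be a uniform quasi-geodesic of the down–across–up shape, and hence each side to lie in a bounded neighbourhood of the union of the other two. Alternatively one can verify Gromov's four-point inequality directly, since $(u\mid v)_{w}$ is, up to additive error, the minimum over the few available strategies, which the formula controls via $\ell$ and the $\log_{\lambda}\delta$ term. I expect the main obstacle to be the distance formula itself — specifically, extracting the collapse estimate from the stated hypothesis with constants that do not degrade as the level grows, and making rigorous that horizontal travel at a fine level genuinely contracts under projection to a coarse level. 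This is exactly where ``hyperbolic with respect to $X$'' does its work: for a Euclidean subdivision rule the layer metrics do not collapse, the $\log_{\lambda}$ term is replaced by something linear, and hyperbolicity fails. Once the formula holds with uniform constants, the remainder is routine if somewhat lengthy bookkeeping.
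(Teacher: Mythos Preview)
Your overall architecture---show that geodesics are coarsely ``down--across--up'' and then verify thin triangles---is the same as the paper's, but the central technical step you propose does not go through. The paper's definition of ``$R$ is hyperbolic with respect to $X$'' is: there exist integers $M,j$ such that whenever $x,y\in\Gamma_{n+j}$ satisfy $d_{n+j}(x,y)<\infty$ and $d_n(f_{n,n+j}(x),f_{n,n+j}(y))\ge M$, one has the \emph{strict} inequality $d_{n+j}(x,y)>d_n(f_{n,n+j}(x),f_{n,n+j}(y))$. Iterating this yields only that horizontal distance drops by at least $1$ every $j$ levels of projection once it exceeds the threshold $M$---a \emph{linear} collapse. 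No $\lambda>1$ can be extracted, so your distance formula with the $\log_\lambda(1+\delta_m)$ term is unobtainable; the depth needed to bring two vertices to within $M$ of each other is on the order of $j\,\delta_m$, not $\log\delta_m$. With vertical edges of unit length this descent costs roughly $2j\,\delta_m$, far more than the $\delta_m$ cost of walking straight across, so the down--across--up path is not even a quasi-geodesic in the original metric, and your projection argument (a horizontal step at height $k$ contracts by $O(\lambda^{-(k-m)})$ under $\pi_m$) has no basis. The analogy with Groves--Manning horoballs is misleading here precisely because horoballs have exponential contraction built into their metric, whereas $\Gamma$ does not.

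The idea you are missing, and which the paper supplies, is a rescaling trick: temporarily give every vertical edge length $\tfrac{1}{3j}$. Then descending $j$ levels and returning costs only $\tfrac{2}{3}$, while the hypothesis guarantees a saving of at least $1$ in horizontal length whenever the projected distance is still $\ge M$. Hence in the rescaled metric it is always profitable to go further down until the horizontal gap is below $M$, and one shows directly (the paper's Lemma preceding the theorem) that every geodesic lies within $2M$ of a standard path whose horizontal segment has length $\le M$. The thin-triangles check for triangles of standard paths is then a short case analysis with constant $6M$, and hyperbolicity transfers back to the original metric by quasi-isometry invariance. In short, you do not need a sharper collapse estimate; you need the observation that linear collapse already suffices once vertical travel is made cheap enough.
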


\newtheorem*{HyperQuotient}{Theorem \ref{HyperQuotient}}
\begin{HyperQuotient}
Let $R$ be a subdivision rule and let $X$ be a finite $R$-complex. If $\Gamma(R,X)$ is $\delta$-hyperbolic, then $R$ is hyperbolic with respect to $X$ and the canonical quotient $\widehat{\Lambda}$ is homeomorphic to the Gromov boundary $\partial \Gamma$ of the history graph. The preimage of each point in the quotient is connected, and its combinatorial diameter in each $\Lambda_n$ has an upper bound of $\delta+1$.
\end{HyperQuotient}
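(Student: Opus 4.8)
The plan is to realize both $\partial\Gamma$ and $\widehat\Lambda$ as the space of ``descending chains'' of cells modulo a coarse equivalence, and then close the argument with a compact--to--Hausdorff bijection. Fix a basepoint $o$ in the subgraph $\Gamma_0$ of level-$0$ vertices (a bounded graph since $X$ is finite, so the choice is immaterial), and write $\ell(v)$ for the level of a vertex $v$. Then $\ell$ is $1$-Lipschitz on $\Gamma$ --- adjacency (``horizontal'') edges preserve $\ell$, parent/child (``vertical'') edges change it by $1$ --- and $d_\Gamma(v,\Gamma_0)=\ell(v)$, the upper bound coming from the straight-up path through parents and the lower bound from the fact that any path down to level $0$ uses at least $\ell(v)$ vertical edges. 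The same observation shows that for a descending chain $\mathbf c=(c_n)$ (each $c_n$ a cell of $R^n(X)$ with $c_{n+1}$ a child of $c_n$) the straight-down path $\gamma_{\mathbf c}$ through $(c_0,0),(c_1,1),\dots$ is in fact a geodesic ray. Let $\Lambda_\infty$ be the space of descending chains with the inverse-limit topology; it is compact and totally disconnected, and $\Phi\colon\Lambda_\infty\to\partial\Gamma$, $\mathbf c\mapsto\gamma_{\mathbf c}(\infty)$, is the candidate for the inverse of the homeomorphism.

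Next I would show $\Phi$ is surjective. Each $\Lambda_n$ is finite, so the sub-level sets $\{v:\ell(v)\le N\}$ are finite, and hence along any geodesic ray $\gamma$ from $o$ the levels tend to $\infty$; a diagonal argument using the commuting ``level-$n$ ancestor'' maps then produces a subsequence $i_k$ and a chain $\mathbf c$ with $(c_n,n)$ the level-$n$ ancestor of $\gamma(i_k)$ for all large $k$, and since $d_\Gamma((c_n,n),\gamma(i_k))\le\ell(\gamma(i_k))-n$ we get $((c_n,n)\mid\gamma(i_k))_o\ge n$, forcing $\gamma_{\mathbf c}(\infty)=\gamma(\infty)$. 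To descend to $\widehat\Lambda$ I would prove: $\gamma_{\mathbf c}$ and $\gamma_{\mathbf c'}$ are asymptotic iff $\sup_n d_{\Lambda_n}(c_n,c'_n)<\infty$, which (with the appropriate intrinsic description) is exactly the relation defining the canonical quotient. The ``if'' direction is one line: a geodesic in $\Lambda_n$ is a horizontal path in $\Gamma$, so a uniform bound $M$ gives $d_\Gamma(\gamma_{\mathbf c}(n),\gamma_{\mathbf c'}(n))\le M$ and thus $(\gamma_{\mathbf c}(n)\mid\gamma_{\mathbf c'}(n))_o\ge n-M/2\to\infty$. For ``only if'' I would use that two geodesic rays from $o$ with a common ideal endpoint satisfy $(\gamma_{\mathbf c}(n)\mid\gamma_{\mathbf c'}(n))_o\ge n-2\delta$ (four-point inequality), hence $d_\Gamma(\gamma_{\mathbf c}(n),\gamma_{\mathbf c'}(n))\le 4\delta$, and then project a $\Gamma$-geodesic between $(c_n,n)$ and $(c'_n,n)$ up to level $n$ by replacing each vertex with its level-$n$ ancestor: vertical edges collapse, horizontal edges map to adjacency-or-equality because adjacency of cells is inherited by their parents, so one gets a walk in $\Lambda_n$ of length $\le 4\delta$.

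Granting this, $\Phi$ descends to a continuous bijection $\bar\Phi\colon\widehat\Lambda\to\partial\Gamma$ --- continuity because chains agreeing on their first $n+1$ terms give rays sharing an initial geodesic segment of length $n$, hence ideal points with Gromov product $\ge n-O(1)$ --- and since $\widehat\Lambda$ is compact (a quotient of the compact $\Lambda_\infty$) and $\partial\Gamma$ is Hausdorff ($\Gamma$ is locally finite because $R$ is finite, hence proper and geodesic), $\bar\Phi$ is a homeomorphism. That $R$ is hyperbolic with respect to $X$ then follows, the equivalence just proved being (or at once implying) the defining combinatorial condition, with the estimates above as the quantitative input. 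For the fiber over $\xi=\gamma_{\mathbf c}(\infty)$ at level $n$: given a second chain $\mathbf c'$ over $\xi$, project a $\Gamma$-geodesic between deep points $(c_m,m)$ and $(c'_m,m)$ up to level $n$; the resulting $\Lambda_n$-walk from $c_n$ to $c'_n$ has every vertex again in the fiber, since by thinness of the triangle $o,(c_m,m),(c'_m,m)$ each such vertex is $\delta$-close to the ray defining $\xi$ and therefore extends to a chain asymptotic to $\xi$, giving the asserted connectedness; and running the thin-triangle estimate instead on the ideal triangle $o,(c'_n,n),\xi$ --- where the only loss beyond $d_\Gamma$ is the single step incurred in passing to the intrinsic metric of $\Lambda_n$ --- bounds the walk's length, hence the fiber's diameter in $\Lambda_n$, by $\delta+1$.

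The step I expect to be the crux is the equivalence ``asymptotic $\iff$ uniformly bounded in the $\Lambda_n$-metrics'' together with the sharp constant $\delta+1$: one must move repeatedly between the single metric on $\Gamma$ and the infinitely many intrinsic metrics on the slices $\Lambda_n$, and the only control on how much a $\Gamma$-geodesic travels ``vertically'' versus ``horizontally'' comes from hyperbolicity itself. Getting the optimal constant, rather than a crude $O(\delta)$, forces the thin-triangle argument to be run on the ideal triangle with vertex $\xi$ and requires careful use of the finitely many cell types of $R$ --- so that ``children of adjacent cells'' stay within a controlled distance instead of letting that count blow up along the geodesic.
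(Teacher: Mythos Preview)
Your route through the inverse system $\Lambda_\infty$ of descending chains is different from the paper's, which works directly with the surjection $p\colon\Lambda\to\partial\Gamma$ furnished by Lemma~\ref{GeoLemma} and identifies its fibers with the defining equivalence classes. The core equivalence you isolate---asymptotic rays $\iff$ uniformly bounded $d_{\Lambda_n}$-distance, with bound $O(\delta)$---is correct and is also the heart of the paper's argument. But there is a type mismatch you do not close: by definition $\widehat\Lambda$ is a quotient of the \emph{topological} limit set $\Lambda\subseteq X$, not of $\Lambda_\infty$. Your argument produces a homeomorphism $\Lambda_\infty/{\sim}\to\partial\Gamma$, and the map $x\mapsto(f_n(x))_n$ from $\Lambda$ into $\Lambda_\infty$ respects the two relations, but Lemma~\ref{GeoLemma} only says every chain lies within distance $2$ of one arising from $\Lambda$; you still owe the identification $\widehat\Lambda\cong\Lambda_\infty/{\sim}$. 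This bites in the connectedness claim: the theorem asserts that each equivalence class is connected \emph{as a subset of $\Lambda$}, whereas your walk-projection argument addresses combinatorial connectedness of a set of cells at a fixed level (and even there, ``$\delta$-close to the ray at one level'' does not by itself produce a chain asymptotic to $\xi$). The paper obtains topological connectedness as a nested intersection of compact connected sets $\overline{W_k(\gamma,n)}$.

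The more serious gap is your derivation of ``$R$ is hyperbolic with respect to $X$.'' That condition demands fixed integers $M,j$ such that $d_n$-distance $\geq M$ forces strictly larger $d_{n+j}$-distance. Your asymptotic equivalence says nothing about strict growth at a \emph{fixed finite} depth: it only tells you that rays whose $d_{\Lambda_n}$-distance stays bounded forever are asymptotic. You cannot pass to infinite rays by contradiction, because the negation of the condition gives, for each $(M,j)$, \emph{some} bad pair at \emph{some} level---not a single pair bad for all $j$ that could be extended. The paper fills this with a direct thin-triangle argument on the finite triangle $O,x,y$: if $d_{n+j}(x,y)=d_n(f_{n,n+j}(x),f_{n,n+j}(y))=M\geq\delta$ and $j>3M/2+2\delta+1$, then a point on the segment $Ox$ at an intermediate level $k$ is farther than $\delta$ from the side $xy$ (which stays above level $n+j-M/2$), and projecting a putative short path to the side $Oy$ down to its lowest level contradicts $d_{n'}=M$. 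This finite-triangle step is exactly the ``quantitative input'' you allude to but do not supply.
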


\newtheorem*{CombableThm}{Theorem \ref{CombableThm}}
\begin{CombableThm}
History graphs of subdivision rules are combable. Thus, groups which are quasi-isometric to history graphs are combable and have a quadratic isoperimetric inequality.
\end{CombableThm}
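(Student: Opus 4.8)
The plan is to construct an explicit combing of the history graph $\Gamma=\Gamma(R,X)$ directly from the ancestry structure of the subdivision rule, check the two defining properties of a combing by hand, and then quote the standard implications (combability is a quasi-isometry invariant, and a group with a quasigeodesic synchronous combing has quadratic Dehn function).

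First I would set up the combing. Fix a basepoint $*$ among the level-$0$ cells. A vertex $v$ of $\Gamma$ is a cell of some level $n$, and it has a unique parent, the level-$(n-1)$ cell containing it; iterating yields ancestors $\mathrm{anc}_0(v),\dots,\mathrm{anc}_n(v)=v$ with each $\mathrm{anc}_k(v)\to\mathrm{anc}_{k+1}(v)$ a vertical edge of $\Gamma$. Define $\sigma_v$ to be a shortest level-$0$ path in $\Gamma$ from $*$ to $\mathrm{anc}_0(v)$, followed by this vertical chain. Since $X$ is finite, the level-$0$ part of $\Gamma$ has some finite diameter $D$, so $\sigma_v$ has length at most $n+D$; since every edge of $\Gamma$ changes the level by at most one, the level function is $1$-Lipschitz and $d_\Gamma(*,v)\ge n$. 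Hence each $\sigma_v$ is a $(1,D)$-quasigeodesic traversed at unit speed, which already gives all the combing conditions except fellow-travelling.

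The substance is the synchronous fellow-traveller property, and the key combinatorial input is that \emph{subdivision respects adjacency}: if two level-$n$ cells share a face, then so do their parents, because that face lies in the closures of both parents. I would propagate this up the ancestry tree: if $v,w$ are joined by a horizontal edge at level $n$ then $\mathrm{anc}_k(v)$ and $\mathrm{anc}_k(w)$ are equal or adjacent for all $k\le n$, in particular at $k=0$; reparametrising $\sigma_v$ and $\sigma_w$ by level, and using $D<\infty$ to control the two level-$0$ segments, then yields a uniform bound on $d_\Gamma(\sigma_v(t),\sigma_w(t))$. For a vertical edge, one of $\sigma_v,\sigma_w$ is an initial segment of the other and the bound is trivial. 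This shows $\Gamma$ is combable, and the argument is purely combinatorial — it uses nothing about hyperbolicity or nonpositive curvature.

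For the last sentence I would transport the combing: given a quasi-isometry $G\to\Gamma$, push forward the $\sigma_v$, tame the resulting quasigeodesics to honest edge paths in a Cayley graph of $G$, and note that the fellow-traveller and quasigeodesic conditions survive up to constants; so $G$ is combable, and by the well-known theorem that a group with a quasigeodesic synchronous combing satisfies a quadratic isoperimetric inequality (Epstein et al.; Gersten; Bridson) we are done. I expect the only real obstacle to be the fellow-traveller verification — pinning down "subdivision respects adjacency" in whatever generality the paper's notion of $R$-complex requires, and bookkeeping the level-$0$ segments — with everything else being a one-line estimate or a citation.
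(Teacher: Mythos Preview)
Your approach is correct and is essentially the paper's: take the vertical ancestry path as the combing line for each vertex and verify the quasi-isometric embedding into the path space. Two points where the paper's execution is cleaner than yours. First, the history graph as defined here already contains a distinguished origin vertex $O$ joined by an edge to \emph{every} vertex of $\Gamma_0$; using $O$ rather than a level-$0$ basepoint $*$ makes each combing line the \emph{unique} geodesic from $O$ to $v$, so the level-$0$ segment and the constant $D$ disappear and one gets the two-sided bound $d(v,v')\le d_P(\gamma_v,\gamma_{v'})\le 2\,d(v,v')$ for arbitrary pairs directly, without reducing to the edge-by-edge fellow-traveller check. Second, horizontal edges in $\Gamma_n$ here encode \emph{inclusions} of cells, not face-sharing; the fact you actually need---that the transition maps $f_{m,n}\colon\Gamma_n\to\Gamma_m$ are $1$-Lipschitz---is true and is what drives the paper's estimate, but your ``share a face'' justification should be rephrased accordingly.
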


\newtheorem*{EuclideanThm}{Theorem \ref{EuclideanTheorem}}
\begin{EuclideanThm}
Let $R$ be a subdivision rule, and let $X$ be a finite $R$-complex. Assume that $\Gamma(R,X)$ is quasi-isometric to a Cayley graph of a group $G$. Then:
\begin{enumerate}
\item the counting function $c_X$ is a quadratic polynomial if and only if $\Gamma(R,X)$ is quasi-isometric to $\mathbb{E}^2$, the Euclidean plane, and
\item the counting function $c_X$ is a cubic polynomial if and only if $\Gamma(R,X)$ is quasi-isometric to $\mathbb{E}^3$, Euclidean space.
\end{enumerate}
\end{EuclideanThm}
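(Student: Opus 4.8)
The plan is to translate the statement about $c_X$ into one about the growth function of the history graph $\Gamma := \Gamma(R,X)$ and then invoke Gromov's polynomial growth theorem together with the Bass--Guivarc'h growth formula. The bridge is the observation that $c_X$ is quasi-equivalent to the growth function $n \mapsto \#B_\Gamma(v_0,n)$ of $\Gamma$ based at a level-$0$ vertex $v_0$: an edge of $\Gamma$ changes the level of a vertex by at most one, so every vertex within distance $n$ of $v_0$ is a cell of level $\le n$; conversely, a cell of level $k$ has a chain of ancestors down to a cell of $R^0(X)$, which is a path of length $k$ from that level-$0$ cell to it, and since $X$ is finite and connected the base $v_0$ reaches any level-$0$ cell within a bounded distance. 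Hence $\#B_\Gamma(v_0,n)$ equals, up to a bounded shift of $n$ and a bounded multiplicative constant, the total number of cells in $R^0(X),\dots,R^n(X)$, i.e.\ $c_X(n)$; in particular $\Gamma$ has polynomial growth of degree $m$ precisely when $c_X$ does. The sharper assertion that $c_X$ is then literally a polynomial of degree $m$ --- needed in the direction starting from Euclidean geometry --- follows from the fact that the vector of cell counts indexed by tile type evolves by the nonnegative integer subdivision matrix $M$ of $R$: polynomial growth forces the spectral radius of $M$ to equal $1$, the Perron--Frobenius (Rothblum index) theory of nonnegative matrices places the largest Jordan block of $M$ at the eigenvalue $1$, and summing the resulting level counts produces a genuine polynomial of the claimed degree, the contributions of other modulus-one (root-of-unity) eigenvalues being of strictly lower order.

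Assume first that $c_X$ is a quadratic (resp.\ cubic) polynomial. By the above, $\Gamma$ has polynomial growth of degree $2$ (resp.\ $3$), hence so does $G$, since growth type is a quasi-isometry invariant and $\Gamma$ is quasi-isometric to a Cayley graph of $G$. By Gromov's theorem $G$ is virtually nilpotent; replace it by a finite-index torsion-free nilpotent subgroup $N$, which has the same growth degree. By Bass--Guivarc'h the growth degree of $N$ is $\sum_{i\ge1} i\cdot \mathrm{rank}(\gamma_iN/\gamma_{i+1}N)$, where $\{\gamma_iN\}$ is the lower central series; a short case analysis shows that for growth degree $2$ or $3$ the only possibility is $N\cong \mathbb{Z}^2$ or $N\cong\mathbb{Z}^3$ respectively (any weighted rank combination other than $\mathrm{rank}(N^{\mathrm{ab}})=m$ with $\gamma_2N=1$ would force a non-abelian torsion-free nilpotent group of Hirsch length $2$, which does not exist). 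Therefore $G$ is virtually $\mathbb{Z}^m$, so $G$ --- and hence $\Gamma$ --- is quasi-isometric to $\mathbb{Z}^m$, and thus to $\mathbb{E}^m$, with $m=2$ (resp.\ $3$).

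Conversely, if $\Gamma$ is quasi-isometric to $\mathbb{E}^m$ for $m\in\{2,3\}$, then $\Gamma$ has polynomial growth of degree $m$ (that of $\mathbb{E}^m$, a quasi-isometry invariant), so by the first paragraph $c_X$ is a polynomial of degree $m$ --- quadratic for $m=2$, cubic for $m=3$. Combined with the forward direction this gives both equivalences. The hypothesis $m\le3$ is essential: for $m\ge4$ there exist torsion-free nilpotent groups of growth degree $m$ that are not quasi-isometric to $\mathbb{E}^m$ (e.g.\ the integral Heisenberg group for $m=4$), so the implication from ``$c_X$ a degree-$m$ polynomial'' to ``quasi-isometric to $\mathbb{E}^m$'' would fail, which is why the statement stops at degree $3$.

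The group-theoretic ingredients --- Gromov's polynomial growth theorem, the Bass--Guivarc'h formula, the classification of low-growth nilpotent groups, and the equivalence of ``virtually $\mathbb{Z}^m$'' with ``quasi-isometric to $\mathbb{E}^m$'' --- are standard, and the deduction is essentially formal once the first paragraph is in place. I therefore expect the main obstacle to be precisely that first paragraph: establishing, with the paper's exact definitions of $c_X$ and of the history graph, the quasi-equivalence of the combinatorial counting function with the coarse growth of $\Gamma$, and especially the upgrade in the Euclidean-to-polynomial direction from ``$c_X(n)$ grows like $n^m$'' to ``$c_X$ is a polynomial of degree $m$,'' which is genuinely a spectral statement about the subdivision matrix and requires ruling out that an imprimitive eigenvalue of modulus one contributes at the top order.
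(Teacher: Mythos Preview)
Your forward direction is essentially the paper's argument. The paper invokes Theorem~\ref{Quasi1Thm} to identify $c_X$ with the growth of $G$ up to equivalence, applies Gromov's polynomial-growth theorem, and then quotes a result of Mann (recorded as Theorem~\ref{MannTheorem}) that within the nilpotent-by-finite groups all groups of quadratic, respectively cubic, growth form a single commensurability class. You replace the citation to Mann by a direct appeal to the Bass--Guivarc'h formula together with a short case analysis; this is correct and more self-contained, and the two routes are equivalent in content.

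Where you go further than the paper is in the converse. The paper disposes of it in one line (``the converse holds by Theorem~\ref{Quasi1Thm}''), but that theorem only yields that $c_X$ is \emph{equivalent} to a quadratic or cubic function, not that it is literally a polynomial. You are right to flag the upgrade to ``$c_X$ is a polynomial of degree $m$'' as a genuine extra step, and your spectral approach via the subdivision matrix $M$ is the natural one. Your residual worry about peripheral eigenvalues is also well placed: for a nonnegative integer matrix of spectral radius $1$, eigenvalues $\omega\neq 1$ on the unit circle can carry Jordan blocks of the same size as the block at $1$ (already for a permutation matrix), so the level counts themselves need not be polynomial. Summing over levels does drop the oscillatory top-order contribution by one degree via Abel summation, so the leading term of $c_X$ is as claimed, but lower-order oscillations need not cancel, and in general $c_X$ is only a polynomial up to a bounded periodic correction. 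In short, your argument cleanly recovers ``$c_X$ has polynomial growth of degree $m$,'' which is exactly what the paper's proof establishes too; neither argument forces $c_X$ to be a literal polynomial, and the biconditional is best read up to the equivalence relation on growth functions used throughout the paper.
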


These four theorems allow us to distinguish easily between subdivision rules representing a group with a 2-dimensional geometry or a 3-dimensional geometry. We discuss each geometry in detail in Section \ref{ClassificationSection}.

\subsection{Acknowledgements}
The author would like to thank Ruth Charney for introducing him to many quasi-isometry properties, and Jason Behrstock for introducing him to many of the properties of combable spaces that were used in this paper.

The anonymous referee provided numerous helpful suggestions leading to substantial revision.

\subsection{Outline}

Our goal is to identify subdivision rules and complexes whose history graphs are quasi-isometric to important spaces such as hyperbolic space or Euclidean space. They allow us to convert many questions about quasi-isometries into questions about combinatorics.

In Section \ref{BackgroundSection}, we introduce the necessary background material on quasi-isometries and finite subdivision rules.

In Section \ref{HyperSection}, we establish the relationship between subdivision rules and the Gromov boundary of hyperbolic spaces. In particular, we give a simple combinatorial characterization of those subdivision rules and complexes whose history graphs are hyperbolic, and show how to recover the Gromov boundary of such a history graph directly from the subdivision rule and complex.

In Section \ref{Combable}, we show that all groups quasi-isometric to a history graph satisfy a quadratic isoperimetric inequality, which shows that 3-dimensional Nil and Sol groups cannot be quasi-isometric to a history graph.

Finally, in Section \ref{ClassificationSection}, we collect our various tools to distinguish between all history graphs which are quasi-isometric to one of the geometries of dimension less than 4. Several additional theorems are proved characterizing spherical and Euclidean geometries.

\section{Background and Definitions}\label{BackgroundSection}

\subsection{Quasi-isometries and their properties}\label{QuasiSection}

Quasi-isometries are one of the central topics in geometric group theory \cite{drutu2011lectures,de2000topics,gromov1984infinite}. A quasi-isometry is a looser kind of map than an isometry; instead of requiring distances to be preserved, we instead require that distances are distorted by a bounded amount:

\begin{definition}
Let $f$ be a map from a metric space $X$ to a metric space $Y$. Then $f$ is a \textbf{quasi-isometry} if there is a constant $K$ such that:
\begin{enumerate}
\item for all $x_1,x_2$ in $X$, $$\frac{1}{K}d_X(x_1,x_2)-K \leq d_Y(f(x_1),f(x_2))\leq K d_X(x_1,x_2)+K$$
\item for all $y$ in $Y$, there is an $x$ in $X$ such that $d_Y(y,f(x))<K$.
\end{enumerate}
A map satisfying 1 but not 2 is called a \textbf{quasi-isometric embedding}.
\end{definition}

Quasi-isometries can be best understood by seeing what properties they preserve. One of the most important properties preserved by quasi-isometries is \textbf{hyperbolicity} \cite{gromov1987hyperbolic}. Intuitively, Gromov hyperbolic spaces are similar to trees (recall that a \textbf{tree} is a graph with no cycles). In fact, small subsets of Gromov hyperbolic spaces can be closely approximated by trees (see Theorem 1 of Chapter 6 of \cite{coornaert1990geometrie}). A rigorous definition for a Gromov hyperbolic space can be given by the Gromov product (see Section 2.10 of \cite{2005187}):

\begin{definition}
Let $X$ be a metric space. Let $x,y,z$ be points in $X$. Then the \textbf{Gromov product} of $y$ and $z$ with respect to $x$ is

$$(y,z)_x=\frac{1}{2}(d(x,y)+d(x,z)-d(y,z)).$$

By the triangle inequality, the Gromov product is always nonnegative. In a tree, the Gromov product measures how long the geodesics (i.e. shortest paths) from $x$ to $y$ and from $x$ to $z$ stay together before diverging.

Now assume $X$ is a geodesic metric space (so that the distance between points is realized by minimal-length paths). Then $X$ is $\delta$-\textbf{hyperbolic} (\textbf{Gromov hyperbolic}) if there is a $\delta>0$ such that for all $x,y,z,p$ in $X$,

$$(x,z)_p > \min\{(x,y)_p,(y,z)_p\}-\delta$$
\end{definition}

An equivalent definition \cite{gromov1987hyperbolic} of a hyperbolic space is that geodesic triangles are $\delta$-\textbf{thin}, meaning that any edge in a geodesic triangle is contained in the $\delta$-neighborhood of the other 2 edges of the geodesic triangle.

Being hyperbolic is a quasi-isometry invariant \cite{gromov1987hyperbolic}. Another quasi-isometry invariant of a hyperbolic group is its boundary \cite{kapovich2002boundaries}:

\begin{definition}
Let $X$ be a metric space with a fixed basepoint $O$. Let $Z$ denote the set of infinite geodesic rays starting from $O$, parametrized by arclength. Define an equivalence relation on $Z$ by letting $\gamma \sim \nu$ if the Gromov product $(\gamma(t),\nu(t))_O$ diverges to infinity as $t$ goes to infinity.

The set of all such equivalence classes is denoted $\partial X$, and is called the \textbf{boundary} of $X$. Its topology is generated by basis elements of the form

$$U(\gamma,r)=\{[\nu]\in \partial X|(\gamma(t),\nu(t))_O\geq r \text{ for all $t$ sufficiently large} \}$$

\end{definition}

A quasi-isometry between metric spaces $X$ and $Y$ induces a homeomorphism from $\partial X$ to $\partial Y$ (Proposition 2.20 of \cite{kapovich2002boundaries}). The subdivision rules we describe in this paper play a similar role to the boundary of a hyperbolic group, as we shall see. Subdivision rules are not quasi-isometry invariants, but many of their combinatorial properties are.

Two other invariants we will use are growth and ends.

\begin{definition}
A \textbf{growth function} for a discrete metric space $X$ with base point $x$ and finite metric balls is a function $g:\mathbb{N}\rightarrow\mathbb{N}$ such that $g(n)$ is the number of elements of $X$ of distance no more than $n$ from the basepoint $x$.
\end{definition}

The degree of the growth function (polynomial of degree $d$, exponential, etc.) is a quasi-isometry invariant (Lemma 12.1 of \cite{drutu2011lectures}). If a growth function for a space is a polynomial in $n$, we say that that space has \textbf{polynomial growth}. If a space has a growth function that is a quadratic or cubic polynomial, we say that the space has \textbf{quadratic growth} or \textbf{cubic growth}, respectively.

\begin{definition} Let $X$ be a metric space. Then an \textbf{end} of $X$ is a sequence  $E_1\subseteq E_2 \subseteq E_3\subseteq...$ such that each $E_n$ is a component of $X\setminus B(0,n)$, the complement in $X$ of a ball of radius $n$ about the origin.

If $G$ is a group, then an \textbf{end} of $G$ is an end of a Cayley graph for $G$ with the word metric.
\end{definition}

The cardinality of the set of ends is a quasi-isometry invariant (Proposition 6.6 of \cite{drutu2011lectures}).

\subsection{Subdivision rules}

Subdivision rules arise frequently in mathematics in many guises. They are rules for repeatedly dividing a topological object into smaller pieces in a recursively defined way. Barycentric subdivision, the middle thirds construction of the Cantor set (and its analogues for the Sierpinski carpet and Menger sponge), binary subdivision (used in the proof of the Heine-Borel theorem), and hexagonal refinement (used in circle packings; see p.158 of \cite{stephenson2005introduction}) are all examples of subdivision rules used in mathematics.

The most commonly studied type of subdivision rule is a \textbf{finite subdivision rule} \cite{cannon2001finite,CubeSubs}. Intuitively, a finite subdivision rule takes a CW-complex where each cell is labelled and refines each cell into finitely many smaller labelled cells according to a recursive rule. The different labels are called \textbf{tile types}. If two tiles have the same type, they are subdivided according to the same rule. Each tile type is classified as \textbf{ideal} or \textbf{non-ideal}. We require that ideal tiles only subdivide into other ideal tiles. This distinction will play a role similar to the distinction between the limit set of a group of hyperbolic isometries and its domain of discontinuity. The rigorous definitions will be delayed to Section \ref{FormalSection}.

Given a subdivision rule (typically denoted by the letter $R$), an $R$-\textbf{complex} is essentially a CW complex consisting of a number of top-dimensional cells $T_1,...,T_n$ labelled by tile types. We define $R(X)$ to be the union of the subdivisions $R(T_1)$,...,$R(T_n)$. We can subdivide again to get $R(R(X))$, which we write as $R^2(X)$. We can continue to define $R^3(X),R^4(X)$, etc.

Barycentric subdivision in dimension $n$ is the classic example of a subdivision rule. There is only one tile type (a simplex of dimension $n$), and each simplex of dimension $n$ is subdivided into $(n+1)!$ smaller simplices of dimension $n$.

Subdivision rules were originally used to study hyperbolic 3-manifold groups, and only 2-dimensional subdivision rules were considered \cite{cannon2001finite,Expansion1,Expansion2}.

\subsection{The history graph}\label{HistorySection}

The history graph is one of the most useful constructions involving subdivision rules. It is a metric space whose quasi-isometry properties are directly determined by the combinatorial properties of a given subdivision rule $R$ and an $R$-complex $X$.

We require a preliminary definition:

\begin{definition}
Let $R$ be a subdivision rule, and let $X$ be an $R$-complex. The interior of the union in $X$ of all ideal tiles in every level $R^n(X)$ is called the \textbf{ideal set} and is denoted $\Omega=\Omega(R,X)$. Its complement is called the \textbf{limit set} and is denoted $\Lambda=\Lambda(R,X)$. We will use $\Lambda_n$ to denote the union of all non-ideal tiles in the $n$th level of subdivision $R^n(X)$.
\end{definition}

\begin{definition} Let $R$ be a subdivision rule, and let $X$ be an $R$-complex. Let $\Gamma_n$ be a graph with

\begin{enumerate}
\item a vertex for each cell of $\Lambda_n$, and
\item an edge for each inclusion of cells of $\Lambda_n$ (i.e. if a cell $K$ is contained a larger cell $K'$, the vertex corresponding to $K$ is connected by an edge to the vertex corresponding to $K'$).
\end{enumerate}

We use $d_n$ to denote the path metric on $\Gamma_n$. The metric will take on infinite values if $\Gamma_n$ has more than one component.

The \textbf{history graph} $\Gamma=\Gamma(R,X)$ consists of:
\begin{enumerate}
\item a single vertex $O$ called the \textbf{origin},
\item the disjoint union of the $\Gamma_n$, whose edges are called \textbf{horizontal}, and
\item a collection of \textbf{vertical} edges induced by subdivision; i.e., if a vertex $v$ in $\Gamma_n$ corresponds to a cell $T$, we add an edge connecting $v$ to the vertices of $\Gamma_{n+1}$ corresponding to each of the open cells contained in the interior of $R(T)$. We also connect the origin $O$ to every vertex of $\Gamma_0$.
\end{enumerate}
\end{definition}

\begin{figure}
\begin{center}
\scalebox{.35}{\includegraphics{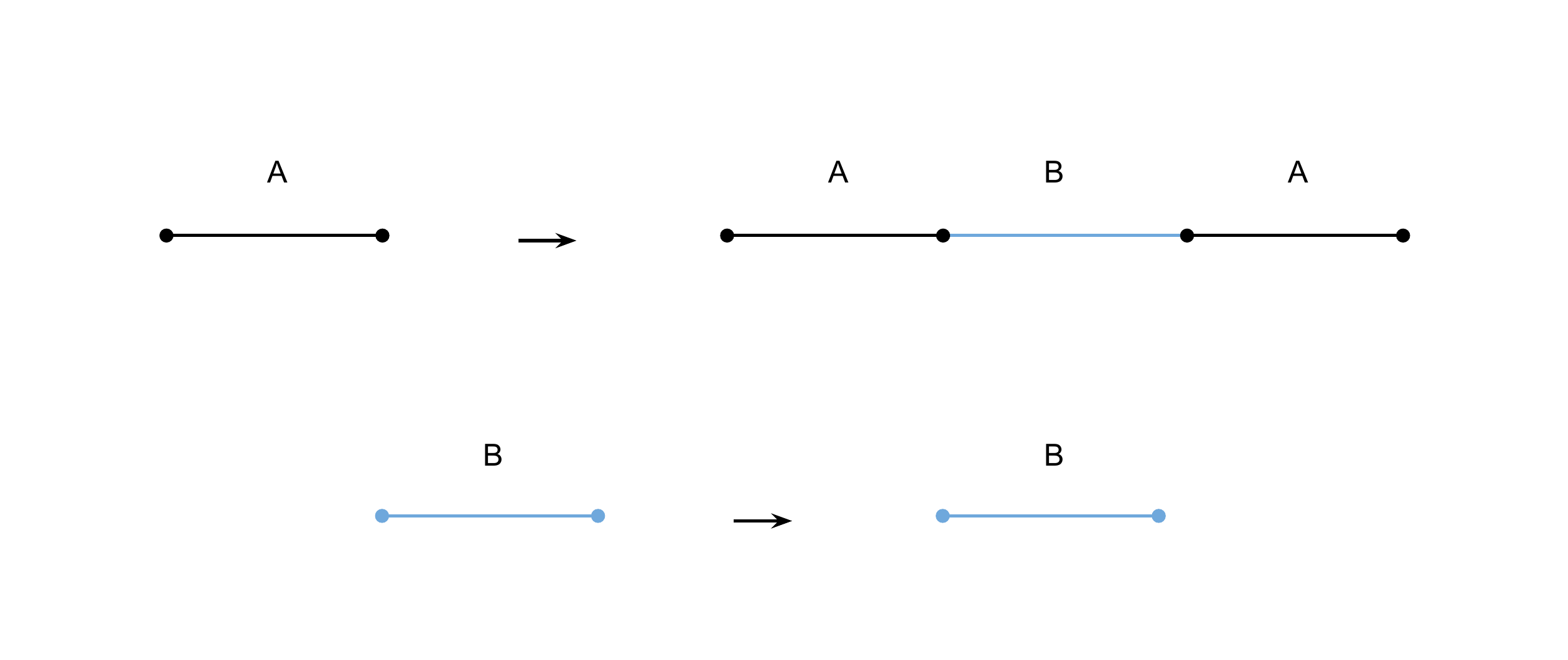}} \caption{The middle thirds subdivision rule used to create the Cantor set. The B tiles are ideal.}
\label{MiddleThirds}
\end{center}
\end{figure}

Every vertex of $\Gamma_{n+1}$ is connected by a unique edge to a vertex of $\Gamma_n$. Notice that the history graph essentially ignores ideal tiles. Ideal tiles are motivated by classic constructions such as the middle thirds subdivision rule for the Cantor set (see Figures \ref{MiddleThirds} and \ref{CantorHistoryGraph}). Labelling tiles as `ideal' is intended to mimic deleting the tiles, which is why ideal tiles are not included in the history graph.

We now define various projection functions involving the limit set and/or the history graph.

\begin{definition}
Let $x$ be an element of $\Lambda$. Because $\Lambda\subseteq \Lambda_n$ for each $n$, $x$ lies in a unique minimal cell $K$ of $\Lambda_n$. The \textbf{projection function} $f_n$ sends $x$ to the vertex of $\Gamma_n$ corresponding to $K$.
\end{definition}

\begin{definition}
Let $m<n$. If a vertex $a$ in $\Gamma_n$ corresponds to a cell $K$ in $\Lambda_n$, then let $b$ be the vertex of $\Gamma_m$ corresponding to the unique cell $K'$ of minimum dimension in $\Lambda_m$ containing $K$. Then we define the \textbf{transition function} or \textbf{projection} $f_{m,n}:\Gamma_n \rightarrow \Gamma_m$ by $f_{m,n}(a)=b$. We extend the function to the edges of $\Gamma_n$ in the natural way. The following lemma shows that another way to view transition functions is that each vertex $a$ in $\Gamma_n$ is sent to the unique vertex $b$ of $\Gamma_m$ that intersects the geodesic $Oa$.
\end{definition}

\begin{lemma}\label{GeoLemma}
Each point $x$ of $\Lambda$ corresponds to a unique geodesic ray of $\Gamma$ based at $O$. Also, every geodesic ray of $\Gamma$ based at $O$ has this form or lies within a neighborhood of radius 2 about a ray of this form.
\end{lemma}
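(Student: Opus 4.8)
The plan is to exploit the layered structure of $\Gamma$. Assign level $n$ to each vertex of $\Gamma_n$ and level $-1$ to $O$; then horizontal edges preserve level, while vertical edges (including those from $O$ to $\Gamma_0$) change it by exactly one. The key preliminary is a \textbf{distance formula}: for a vertex $v$ of $\Gamma_n$ one has $d_\Gamma(O,v)=n+1$, and the iterated vertical-parent path running from $v$ up to $O$ (well defined because each vertex of $\Gamma_{n+1}$ is joined to a unique vertex of $\Gamma_n$) is the \emph{only} geodesic from $O$ to $v$. Indeed, any path from $O$ to $v$ must contain at least $n+1$ level-increasing edges, hence has length at least $n+1$; and a path of length exactly $n+1$ can contain no horizontal or level-decreasing edge, so uniqueness of parents forces it to be the parent path.

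For the first statement, fix $x\in\Lambda$ and let $v_n=f_n(x)$ be the vertex of $\Gamma_n$ for the minimal cell $K_n$ of $\Lambda_n$ containing $x$. I would check that $v_n$ is the vertical parent of $v_{n+1}$: by the definition of the vertical edges, that parent corresponds to the cell $T$ of $\Lambda_n$ such that the open cell $K_{n+1}$ lies in $\mathrm{int}(R(T))$, that is, in the relative interior of $T$; since $x\in K_{n+1}$, this puts $x$ in the relative interior of $T$, so $T$ is the minimal cell of $\Lambda_n$ containing $x$, i.e.\ $T=K_n$. Hence $\gamma_x:=(O,v_0,v_1,\dots)$ is an edge path; by the distance formula its segment to $v_n$ is the unique geodesic from $O$ to $v_n$, so $\gamma_x$ is a geodesic ray, and it is the unique geodesic ray naturally associated to $x$. (This also confirms the remark preceding the lemma that the $f_{m,n}$ iterate the parent map.)

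For the converse, let $\gamma=(O=w_0,w_1,w_2,\dots)$ be any geodesic ray. From $d_\Gamma(O,w_k)=k$ and the distance formula, $w_k\in\Gamma_{k-1}$, so the edge $w_kw_{k+1}$ joins level $k-1$ to level $k$ and is therefore level-increasing vertical; that is, $w_k$ is the vertical parent of $w_{k+1}$. Consequently the open cells underlying $w_1,w_2,\dots$ have nested closures $\widehat K_0\supseteq\widehat K_1\supseteq\cdots$, with $\widehat K_n$ a closed cell contained in $\Lambda_n$. Since $X$ is a finite, hence compact, complex, $\bigcap_n\widehat K_n\neq\varnothing$; choose $x$ in this intersection. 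Then $x\in\bigcap_n\Lambda_n=\Lambda$, so $f_n(x)$ is defined; and since $x\in\widehat K_n$, the minimal cell $f_n(x)$ is a face of $\widehat K_n$, so $f_n(x)$ and $w_{n+1}$ either coincide or are joined by an edge in $\Gamma_n$. Therefore $\gamma$ lies in the radius-$1$ (hence radius-$2$) neighborhood of $\gamma_x$, which is of the required form, and $\gamma=\gamma_x$ precisely when $f_n(x)=\widehat K_n$ for all $n$.

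The routine pieces are the distance formula and the edge-path checks; two points need real care. The first is the cell-structure bookkeeping distinguishing open cells, their closures, and $\mathrm{int}(R(T))$: this is what makes ``vertical parent'' well defined and identifies it with the minimal-cell description of $f_n$, and it is precisely why a general geodesic ray may differ from a $\Lambda$-ray by a single face inclusion rather than coinciding, which is what the radius-$2$ slack absorbs. The second is the identity $\Lambda=\bigcap_n\Lambda_n$, used to land the limit point $x$ in $\Lambda$: the inclusion $\Lambda\subseteq\bigcap_n\Lambda_n$ is immediate, since $X\setminus\Lambda_n$ is open and each of its points has a neighborhood (its closed star in $R^n(X)$) built only from ideal tiles, hence lies in $\Omega$; the reverse inclusion is the genuinely point-set step, turning on the definition of $\Omega$ as an interior, and is the part I expect to be the main obstacle.
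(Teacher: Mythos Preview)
Your argument is correct and follows essentially the same route as the paper: build $\gamma_x$ from the projections $f_n(x)$, then for an arbitrary geodesic ray use the purely-vertical structure to get nested closed cells, pick $x$ in their intersection, and compare. Two remarks. First, you invoke finiteness of $X$ to get compactness, but the lemma does not assume $X$ finite; the paper instead uses that each individual closed cell $\widehat K_n$ is compact, which is all that is needed for the nested intersection to be nonempty. Second, you are right to flag the identity $\Lambda=\bigcap_n\Lambda_n$ as the delicate point: the paper simply cites ``the first portion of the proof'' to conclude $\gamma'$ is of the required form, implicitly treating the intersection point as lying in $\Lambda$, so your proposal is in fact more scrupulous here than the paper's own argument. (The inclusion $\Lambda\subseteq\bigcap_n\Lambda_n$ is what the paper records just before defining $f_n$; for the reverse, one can check that $\operatorname{int}(I_n)=X\setminus\Lambda_n$ in a pure-dimensional complex and then argue that $\operatorname{int}(\bigcup_n I_n)=\bigcup_n\operatorname{int}(I_n)$.)
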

\begin{proof}
Consider the sequence of vertices of $\Gamma$ given by the origin followed by $\{f_n(x)\}$. We claim that each vertex $f_n(x)$ is connected to $f_{n+1}(x)$ by a vertical edge. To see this, note that the minimal cell of $\Lambda_{n+1}$ that contains $x$ is a subset of the minimal cell of $\Lambda_n$ that contains $x$. Thus, there is a path $\gamma_x$ in $\Gamma$ consisting entirely of vertical edges whose vertex set is the origin together with $\{f_n(x)\}$. This path $\gamma_x$ is a geodesic, because the distance from the origin to $f_n(x)$ is $n$, which is the length of the segment of $\gamma_x$ from the origin to $f_n(x)$.

To prove the second statement, let $\gamma$ be an infinite geodesic ray based at $O$. Such a ray can contain only vertical edges; if the ray contained a horizontal edge, it could be shortened because the vertices of the horizontal edge have the same distance from the origin.

Thus, omitting the origin, the set of vertices crossed by $\gamma$ has the form $\{v_n\}$, with each $v_n$ in $\Gamma_n$ and with $v_n$ connected by a vertical edge to $v_{n+1}$. Let $K_n$ be the cell of $\Lambda^n(X)$ corresponding to the vertex $v_n$. Because each $v_n$ is connected by a vertical edge to $v_{n+1}$, we have $K_{n+1}\subseteq K_n$. Since each cell is compact and connected, the intersection $\bigcap K_n$ is nonempty.

Let $x$ be a point of this intersection. Then $x$ lies in each $K_n$; however, $K_n$ is not necessarily the \emph{minimal} cell of $R^n(X)$ containing $x$. Let $L_n$ be the minimal cell of $R^n(X)$ containing $x$, and let $w_n$ be the corresponding vertex of $\Gamma_n$. Then $L_n\subseteq K_n$, and so each $w_n$ is connected to $v_n$ by a horizontal edge. By the first portion of the proof, there is a geodesic $\gamma'$ going through all of the $w_n$. Thus, all points of $\gamma$ (including the points on the edges) lie within the neighborhood of radius 2 about $\gamma'$.
\end{proof}

\begin{figure}
\begin{center}
\scalebox{.35}{\includegraphics{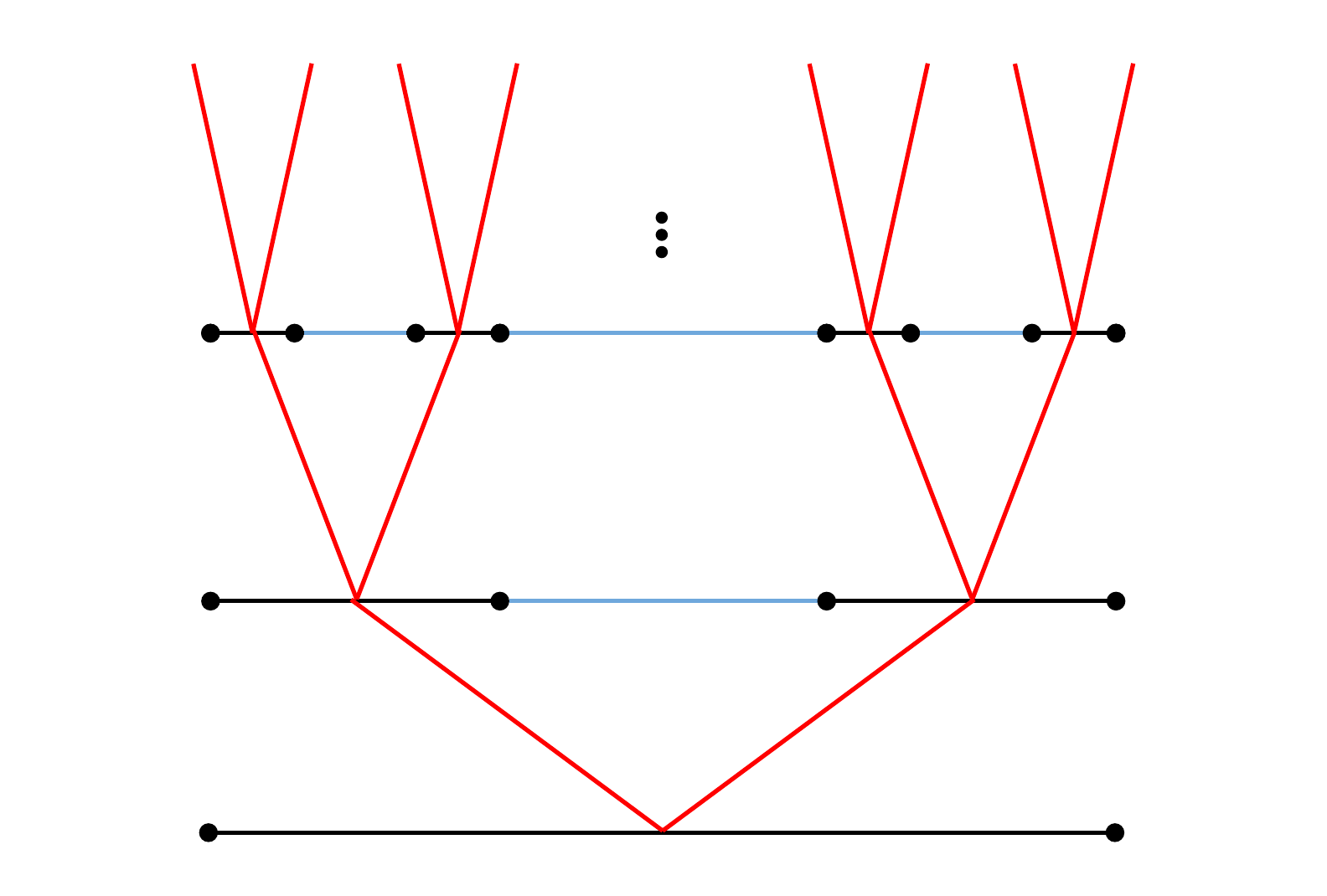}} \caption{The history graph of the middle thirds subdivision rule $R$ and $R$-complex $X$ consisting of a single tile of type $A$. The origin $O$ is omitted.}
\label{CantorHistoryGraph}
\end{center}
\end{figure}

The quasi-isometry properties of the history graph are determined by the combinatorial properties of the subdivision rule. For instance, we have the following theorems:

\begin{theorem}\label{Quasi3Thm}
Let $R$ be a subdivision rule, and let $X$ be an $R$-complex. Let $Y$ be a metric space that is quasi-isometric to the history graph $\Gamma(R,X)$. Then the cardinality of the set of ends of $Y$ is the same as the cardinality of the set of components of $\Lambda$.
\end{theorem}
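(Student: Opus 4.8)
Since the number of ends is a quasi-isometry invariant (Proposition 6.6 of \cite{drutu2011lectures}), it suffices to count the ends of $\Gamma = \Gamma(R,X)$ itself and show this number equals the number of components of $\Lambda$. The plan is to set up an explicit bijection between ends of $\Gamma$ and components of $\Lambda$. The geometric heart of the matter is Lemma \ref{GeoLemma}: every infinite geodesic ray from $O$ is (up to a neighborhood of radius $2$) of the form $\gamma_x$ for some $x \in \Lambda$, and conversely. So a point of $\Lambda$ ``detects'' a way of going to infinity in $\Gamma$, and the question is which such rays are eventually confined to the same component of $\Gamma \setminus B(O,n)$ for all $n$.

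First I would show that the natural map from components of $\Lambda$ to ends of $\Gamma$ is well-defined. Fix $n$; I claim that if $x, y$ lie in the same component $C$ of $\Lambda$, then the rays $\gamma_x, \gamma_y$ eventually lie in the same component of $\Gamma \setminus B(O,n)$. Since $C$ is connected and compact and $\Lambda = \bigcap_k \Lambda_k$ with each $\Lambda_k$ a union of closed cells, a standard compactness/nerve argument shows that for each level $k$ the cells of $\Lambda_k$ meeting $C$ form a connected subgraph of $\Gamma_k$ — concretely, $C$ is covered by finitely many minimal cells of $\Lambda_k$, and two such cells that both meet the connected set $C$ can be joined through a chain of cells all of which meet $C$ (because a separation of the chain would separate $C$). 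This connected subgraph of $\Gamma_k$ lies entirely in $\Gamma \setminus B(O,k-1)$ once $k > n$, and it contains $f_k(x)$ and $f_k(y)$; taking the union over all $k > n$ and noting consecutive levels are joined by vertical edges produces a connected subset of $\Gamma \setminus B(O,n)$ containing tails of both $\gamma_x$ and $\gamma_y$. Hence $x,y$ determine the same end. Conversely, if $x$ and $y$ lie in \emph{different} components of $\Lambda$, then since $\Lambda$ is compact its components are separated, so there is a level $k$ at which no cell of $\Lambda_k$ meeting the component of $x$ touches a cell meeting the component of $y$; a geodesic in $\Gamma$ between tails of $\gamma_x$ and $\gamma_y$ must pass through some $\Gamma_m$ with $m < k$ (it can only decrease then increase in level), hence enters $B(O,k)$, so the two rays are \emph{not} eventually in the same component of $\Gamma \setminus B(O,k)$. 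Thus the map is injective.

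For surjectivity, let $E_1 \subseteq E_2 \subseteq \cdots$ be an end of $\Gamma$. Each $E_m$ is unbounded (it is a component of the complement of a ball in an infinite graph all of whose vertices beyond level $m$ have a vertical edge down to level $m$, so $E_m$ is nonempty and unbounded provided $\Lambda \neq \emptyset$; the case $\Lambda = \emptyset$ forces $\Gamma$ bounded and both counts zero). Pick vertices $v_m \in E_m$ going to infinity in level; by König's lemma applied to the locally finite tree of vertical edges, there is an infinite vertical ray through infinitely many of the $E_m$, which by the proof of Lemma \ref{GeoLemma} yields a point $x \in \Lambda$ with $\gamma_x$ cofinal in the end. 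Then the component of $\Lambda$ containing $x$ maps to this end, by the previous paragraph. This establishes the bijection and hence the theorem.

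The main obstacle I expect is the well-definedness step — proving that a connected component $C$ of $\Lambda$ has connected ``footprint'' $\{$cells of $\Lambda_k$ meeting $C\}$ in each $\Gamma_k$. This requires care because the minimal cell containing a point can jump dimension, and because $\Lambda$ is only the intersection of the $\Lambda_k$, not equal to any one of them; the compactness argument (a separation of the cell-chain graph would pull back to a separation of $C$, contradicting connectedness) is the crux, and one must be careful that ``meeting $C$'' rather than ``contained in $\Lambda$'' is the right relation to track, since cells of $\Lambda_k$ can contain ideal-set points in their interiors at later levels.
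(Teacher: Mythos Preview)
Your overall strategy matches the paper's: set up a correspondence between components of $\Lambda$ and ends of $\Gamma$ via the rays $\gamma_x$ of Lemma~\ref{GeoLemma}, then invoke the quasi-isometry invariance of ends. Two points deserve comment.

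\textbf{Well-definedness.} Your ``cells of $\Lambda_k$ meeting $C$'' argument is workable but more delicate than necessary, and you correctly flag it as the main obstacle. The paper sidesteps this entirely: rather than tracking which cells meet $C$, it simply uses the component $B_n$ of $\Lambda_n$ that contains $C$. Since $B_n$ is connected and locally path-connected (being a union of closed cells), it is path-connected, and hence corresponds to a connected subgraph of $\Gamma_n$ containing both $f_n(x)$ and $f_n(y)$. No nerve or compactness argument is needed.

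\textbf{Injectivity: there is a genuine gap.} Your argument for the converse direction does not go through as written. You assert that ``a geodesic in $\Gamma$ between tails of $\gamma_x$ and $\gamma_y$ must pass through some $\Gamma_m$ with $m<k$ (it can only decrease then increase in level).'' First, the monotone shape of geodesics is not established in general; the paper only proves something like this (Lemma~\ref{StandardLemma}) under the additional hypothesis that $R$ is hyperbolic with respect to $X$. Second, and more seriously, ends are about \emph{arbitrary} paths, not geodesics: two tails lie in the same end if \emph{some} path connects them outside $B(O,k)$, so exhibiting that geodesics dip low proves nothing. Even granting your cell-separation claim at level $k$, a horizontal path in $\Gamma_N$ for $N\gg k$ could connect $f_N(x)$ to $f_N(y)$ through cells of $\Lambda_N$ that meet neither component.

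The paper's fix is to argue in the contrapositive and use projection. Given $x,y$ with $\gamma_x,\gamma_y$ in the same end, for each $n$ take a path $\alpha_n$ in $\Gamma\setminus B(O,n)$ joining $f_n(x)$ to $f_n(y)$ and project it to $\Gamma_n$ via the transition functions $f_{n,m}$; this yields a path in $\Gamma_n$, hence a chain of cells in $\Lambda_n$, so $x$ and $y$ lie in the same component $B_n$ of $\Lambda_n$. Then $\bigcap_n B_n$ is a nested intersection of compact connected sets, hence connected, and contains both $x$ and $y$. This projection step is the missing idea in your injectivity argument.
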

\begin{proof}
Let $B$ be a component of $\Lambda$. Then $B$ is contained in a component $B_n$ of $\Lambda_n$. The set $B_n$ is connected and locally path connected, so it is path connected. This means that it corresponds to a connected subgraph $\Delta_n$ of $\Gamma_n$. Let $x,y$ be points in $B$. By Lemma \ref{GeoLemma}, there are geodesic rays $\gamma_x$ and $\gamma_y$ from the origin which correspond to $x$ and $y$, so that $\gamma_x(n)=f_n(x)$ and $\gamma_y(n)=f_n(y)$. For each $n$, the points $\gamma_x(n),\gamma_y(n)$ lie in $\Delta_n$. Thus, there is a path $\alpha_n$ connecting $\gamma_x(n)$ and $\gamma_y(n)$. The paths $\alpha_n$ lie in the $n$-sphere in $\Gamma$, which lies outside the ball $B(0,n)$ of radius $n$ about the origin. Thus, the rays corresponding to $x$ and $y$ are connected by paths lying outside of $B(0,m)$ for any $m$, so are in the same end of $\Gamma$.

Thus, all rays corresponding to points of $B$ are in the same end of $\Gamma$.

On the other hand, given an end $\{E_n\}$ of $\Gamma$, let $x,y$ be points in $\Lambda$ such that the geodesic rays $\gamma_x$ and $\gamma_y$ corresponding to $x$ and $y$ remain in $\{E_n\}$ (such points exist by Lemma \ref{GeoLemma}). Then for each $n$, there is a path $\alpha_n$ in $\Gamma\setminus B(0,n)$ connecting $f_n(x)$ to $f_n(y)$. Each $\alpha_n$ can be projected to a path in $\Gamma_n$ by transition functions. More explicitly, this is done by mapping every vertex of $\alpha_n$ to $\Gamma_n$ by the appropriate transition functions, and then sending all edges in the path to the corresponding edge between the projections of its vertices, or to a single point if the endpoints are identified. The existence of $\alpha_n$ implies that there is a chain of cells of $\Lambda_n$ connecting $x$ and $y$. This implies that $x$ and $y$ lie in the same component $B_n$ of $\Lambda_n$. Because the intersection of a nested sequence of compact connected sets is connected, $x$ and $y$ lie in the same component of $\Lambda$.

Thus, the cardinality of the set of ends of $\Gamma$ is equal to the cardinality of the set of components of $\Lambda$. If $Y$ is a space quasi-isometric to $\Gamma$, then by Proposition 6.6 of \cite{drutu2011lectures}, their ends are in bijective correspondence.
\end{proof}

We now discuss growth, as defined in Section \ref{QuasiSection}. Two functions $f,g:\mathbb{N}\rightarrow\mathbb{N}$ are \textbf{equivalent} if there are constants $m_1,m_2,b_1,b_2$ such that $f(x)\leq
g(m_1x+b_1)+m_1x+b_1$ and $g(x)\leq
f(m_2x+b_2)+m_2x+b_2$. A growth function for a group is defined to be the growth function of the vertex set of one of its Cayley graphs.

\begin{definition}
Let $R$ be a subdivision rule and let $X$ be a finite $R$-complex. The \textbf{counting function} for $X$ is the function $c_X:\mathbb{N}\rightarrow\mathbb{N}$ whose value at $n$ is the sum of the number of cells in $\Lambda_i$ for $i\leq n$.
\end{definition}

\begin{theorem}\label{Quasi1Thm}
Let $R$ be a subdivision rule and let $X$ be a finite $R$-complex. Let $G$ be a finitely generated group with a Cayley graph which is quasi-isometric to $\Gamma=\Gamma(R,X)$. Then the growth function of $G$ is equivalent to the counting function of $X$.
\end{theorem}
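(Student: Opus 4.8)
The plan is to combine the quasi-isometry invariance of growth with a direct computation of the growth function of the history graph. Since a Cayley graph of $G$ is quasi-isometric to $\Gamma=\Gamma(R,X)$, the induced map on vertex sets is a quasi-isometry of discrete metric spaces, so by Lemma~12.1 of~\cite{drutu2011lectures} (in the form asserting that the growth functions of quasi-isometric spaces are equivalent, not merely of the same degree) it suffices to show that the growth function $g_\Gamma$ of the vertex set of $\Gamma$, based at the origin $O$, is equivalent to $c_X$. This function is well defined: because $R$ is a finite subdivision rule and $X$ is a finite $R$-complex, each $R^n(X)$ has only finitely many cells, so each $\Gamma_n$ is a finite graph, every metric ball about $O$ is finite, and $\Gamma$ is uniformly locally finite.

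The computation is essentially bookkeeping. I claim that every vertex of $\Gamma_k$ lies at distance exactly $k+1$ from $O$. For the upper bound, let $v$ be such a vertex, corresponding to a cell $K$ of $\Lambda_k$, and let $\sigma$ be the cell of $R^{k-1}(X)$ in whose subdivision $K$ appears; as in the proof of Lemma~\ref{GeoLemma}, $\sigma$ is non-ideal because $K$ is and ideal tiles subdivide only into ideal tiles, so $\sigma$ is a cell of $\Lambda_{k-1}$, and $v$ is joined to the corresponding vertex $f_{k-1,k}(v)\in\Gamma_{k-1}$ by a vertical edge. Iterating down to $\Gamma_0$ and then crossing the edge to $O$ exhibits a path of length $k+1$. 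For the lower bound, assign $O$ the level $-1$ and each vertex of $\Gamma_k$ the level $k$; every edge of $\Gamma$ joins vertices whose levels differ by at most $1$, so no path from $O$ to $v$ is shorter than $k-(-1)=k+1$. Hence the ball of radius $n$ about $O$ consists of $O$ together with the vertices of $\Gamma_0,\dots,\Gamma_{n-1}$, and since the vertices of $\Gamma_k$ correspond bijectively to the cells of $\Lambda_k$ we obtain, for $n\geq 1$,
\[
g_\Gamma(n)=1+\sum_{k=0}^{n-1}(\text{number of cells of }\Lambda_k)=1+c_X(n-1),
\]
while $g_\Gamma(0)=1$.

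It remains to note that $c_X$ and the function $n\mapsto 1+c_X(n-1)$ are equivalent in the sense defined above. Since $c_X$ is non-decreasing, $g_\Gamma(n)=1+c_X(n-1)\leq 1+c_X(n)$, and also $c_X(n)=g_\Gamma(n+1)-1\leq g_\Gamma(n+1)$; both inequalities are of the permitted form (an affine reindexing of the argument together with a bounded additive correction). Chaining the equivalences gives that the growth function of $G$ is equivalent to $c_X$.

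The substantive point is the first step: one needs the equivalence-class (not merely degree) version of the invariance of growth, and this requires both spaces to be uniformly locally finite. For a Cayley graph this is automatic, and for $\Gamma$ it is precisely where the finiteness hypotheses on $R$ and $X$ enter; the remainder of the argument is just an unwinding of the definitions of $\Gamma$, $\Lambda_n$, and $c_X$.
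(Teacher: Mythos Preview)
Your argument is correct and follows the same approach as the paper: identify the growth function of $\Gamma$ with the counting function $c_X$ via the bijection between cells of $\Lambda_k$ and vertices of $\Gamma_k$, then invoke quasi-isometry invariance of growth from \cite{drutu2011lectures}. You are in fact more careful than the paper, which asserts that cells of $\Lambda_n$ correspond to the sphere of radius $n$ (it is actually radius $n+1$, as you compute) and does not explicitly check the local-finiteness hypotheses; these refinements are harmless for the equivalence class but make your write-up cleaner.
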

\begin{proof}
By construction, cells of $\Lambda_n(X)$ are in 1-1 correspondence with the vertices of the sphere of radius $n$ in the history graph $\Gamma$. The history graph $\Gamma$ is quasi-isometric to a Cayley graph of $G$. By Lemma 5.1 of \cite{drutu2011lectures}, the growth rate of $G$ is equivalent to the growth rate of $\Gamma$, which is the sum of the number of tiles in $\Lambda_i$ for $i\leq n$.
\end{proof}

As we will show in Section \ref{EuclideanSection}, the above theorem can be used to give a criterion for a history graph to be quasi-isometric to the Euclidean plane or to Euclidean space.

\subsection{Formal Definition of a Subdivision Rule}\label{FormalSection}

At this point, it may be helpful to give a concrete definition of
subdivision rule. This definition is highly abstract and may be omitted on the first reading.

Cannon, Floyd and Parry gave the first
definition of a finite subdivision rule (for instance, in
\cite{cannon2001finite}); however, their definition only applies to
subdivision rules on 2-complexes. In this paper, we study more general subdivision rules. A
\textbf{(colored) finite subdivision rule $R$ of dimension $n$} consists of:
\begin{enumerate} \item A finite $n$-dimensional CW complex $S_R$,
called the \textbf{subdivision complex}, with a fixed cell structure
such that $S_R$ is the union of its closed $n$-cells (so that the complex is pure dimension $n$). We assume that for
every closed $n$-cell $\tilde{s}$ of $S_R$ there is a CW structure $s$
on a closed $n$-disk such that any two subcells that intersect do so in
a single cell of lower dimension, the subcells of $s$ are contained in
$\partial s$, and the characteristic map $\psi_s:s\rightarrow S_R$ which
maps onto $\tilde{s}$ restricts to a homeomorphism onto each open cell.
\item A finite $n$-dimensional complex $R(S_R)$ that is a subdivision of $S_R$.
\item A \textbf{coloring} of the tiles of $S_R$, which is a partition of the set of tiles of $S_R$ into an ideal set $I$ and a non-ideal set $N$.
\item A \textbf{subdivision map} $\phi_R: R(S_R)\rightarrow S_R$, which is a
continuous cellular map that restricts to a homeomorphism on each open
cell, and which maps the union of all tiles of $I$ into itself. \end{enumerate}

Each cell $s$ in the definition above (with its appropriate
characteristic map) is called a \textbf{tile type} of $S_R$. We will often describe an
$n$-dimensional finite subdivision rule by the subdivision of every tile
type, instead of by constructing an explicit complex.

Given a finite subdivision rule $R$ of dimension $n$, an $R$-\textbf{complex}
consists of an $n$-dimensional CW complex $X$ which is the union of its
closed $n$-cells, together with a continuous cellular map $f:X\rightarrow
S_R$ whose restriction to each open cell is a homeomorphism. All tile
types with their characteristic maps are $R$-complexes.

We now describe how to subdivide an $R$-complex $X$ with map
$f:X\rightarrow S_R$, as described above. Recall that $R(S_R)$ is a
subdivision of $S_R$. We simply pull back the cell structure on $R(S_R)$
to the cells of $X$ to create $R(X)$, a subdivision of $X$. This gives
an induced map $f:R(X)\rightarrow R(S)$ that restricts to a
homeomorphism on each open cell. This means that $R(X)$ is an
$R$-complex with map $\phi_R \circ f:R(X)\rightarrow S_R$. We can
iterate this process to define $R^n(X)$ by setting $R^0 (X) =X$ (with
map $f:X\rightarrow S_R$) and $R^n(X)=R(R^{n-1}(X))$ (with map $\phi^n_R
\circ f:R^n(X)\rightarrow S_R$) if $n\geq 1$.

We will use the term `subdivision rule' throughout to mean a colored finite
subdivision rule of dimension $n$ for some $n$. As we said earlier, we
will describe an $n$-dimensional finite subdivision rule by a
description of the subdivision of every tile type, instead of by
constructing an explicit complex.

\section{Hyperbolic subdivision rules and the Gromov boundary}\label{HyperSection}

In this section, we will prove Theorems \ref{HyperHausdorff} and \ref{HyperQuotient}, which characterize subdivision rules and complexes whose history graphs are Gromov-hyperbolic and shows their relationship with the Gromov boundary.

In the remainder of the paper, we let $d_n$ denote the metric on $\Gamma_n$.

\begin{definition}
Let $R$ be a finite subdivision rule and let $X$ be an $R$-complex. We say that $R$ is \textbf{hyperbolic with respect to} $X$ if there are positive integers $M,j$ such that every pair of points $x$,$y$ in $\Gamma_{n+j}$ that satisfy
$$d_{n+j}(x,y)<\infty$$ and
$$d_n(f_{n,n+j}(x),f_{n,n+j}(y))\geq M$$ also satisfy $$d_{n+j}(x,y)>d_n(f_{n,n+j}(x),f_{n,n+j}(y)).$$
\end{definition}

Now, we define a standard path.

\begin{definition}
Let $R$ be a subdivision rule and let $X$ be an $R$-complex. Assume that $R$ is hyperbolic with respect to $X$, with constant $M$ from the definition of hyperbolicity. A \textbf{standard path} in $\Gamma=\Gamma(R,X)$ from a point $x$ to a point $y$ is a geodesic that consists of a vertical, downward path beginning at $x$, a purely horizontal path of length $\leq M$, followed by an upward vertical path ending at $y$. Here `upward' is further from the origin and `downward' is closer to the origin.
\end{definition}

For the proofs of Lemma \ref{StandardLemma} and Theorem \ref{HyperHausdorff} only, we alter the metric on $\Gamma$ by letting each vertical edge have length $\frac{1}{3j}$, where $j$ is the constant from the definition of hyperbolicity for the subdivision rule in question. This changes $\Gamma$ by a quasi-isometry; it is easier to show $\Gamma$ is hyperbolic with this metric. Since hyperbolicity is a quasi-isometry invariant, this implies that $\Gamma$ with the standard metric is also hyperbolic.

\begin{lemma}\label{StandardLemma}
Let $R$ be a subdivision rule that is hyperbolic with respect to an $R$-complex $X$. Then every geodesic $\alpha$ in the history graph $\Gamma=\Gamma(R,X)$ between vertices is within $2M$ of a standard path, where $M$ is a constant depending only on $R$ and $X$.
\end{lemma}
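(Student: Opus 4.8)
The plan is to show that any geodesic $\alpha$ between vertices $x$ and $y$ can be reorganized into a standard path without moving it more than $2M$. First I would analyze the shape of $\alpha$. By the same argument used in Lemma \ref{GeoLemma}, a geodesic cannot contain too many horizontal edges at a fixed level, since horizontal edges connect vertices equidistant from the origin; so $\alpha$ descends from $x$ to some minimal level $m$, then ascends to $y$, possibly with a few horizontal excursions. Write $\alpha$ as a downward portion $\alpha_1$ from $x$ to a vertex $p$ in $\Gamma_m$, a (short) horizontal portion $\beta$ in $\Gamma_m$ from $p$ to $q$, and an upward portion $\alpha_2$ from $q$ to $y$. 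The target standard path should be the vertical descent from $x$ to $\Gamma_m$ (following transition functions $f_{m,n}$), a horizontal path of length $\le M$, and the vertical ascent to $y$.

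The key step is bounding the length of the horizontal portion $\beta$ by something controlled, and then replacing it by a horizontal path of length $\le M$ using the hyperbolicity hypothesis. Here is where the hyperbolicity of $R$ with respect to $X$ enters: if at some intermediate level $n$ (with $m \le n$, say $n = m$) the projected horizontal distance $d_n(f_{n,n+j}(\cdot),f_{n,n+j}(\cdot))$ between the two ``feet'' of $\alpha$ were at least $M$, then by definition it would be \emph{strictly less} than the distance higher up, which contradicts $\alpha$ being a geodesic (the vertical-then-horizontal-then-vertical route through the lower level would be shorter, using the rescaled metric where vertical edges have length $\tfrac{1}{3j}$). Iterating this observation down to the bottom level forces the relevant horizontal distance at level $m$ to be $< M$. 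That gives us a standard path $\sigma$ from $x$ to $y$.

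Finally I would bound the Hausdorff distance between $\alpha$ and $\sigma$. Both paths have a common endpoint structure: $\alpha$'s downward part $\alpha_1$ and $\sigma$'s downward part both consist of vertical edges from $x$, and vertical edges from a vertex are essentially unique going downward (each vertex of $\Gamma_{n+1}$ has a unique vertical edge to $\Gamma_n$), so $\alpha_1$ and the descent of $\sigma$ literally coincide except near level $m$ where $\alpha$ may wander horizontally by the bounded amount established above; the same applies to the upward parts. Combining the bound on horizontal wandering (at most $M$ from the estimate above) with the bound of $2$ from Lemma \ref{GeoLemma}-style reasoning on how far a geodesic strays from its vertical skeleton, one gets that every point of $\alpha$ is within $2M$ of $\sigma$ and vice versa.

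The main obstacle I expect is the bookkeeping in the iterated descent: making precise that a geodesic which dips down to level $m$ and back up must, at \emph{every} intermediate level, have small projected horizontal distance between its descending and ascending strands, so that the hyperbolicity inequality can be applied cleanly and the resulting path is genuinely a geodesic (not merely a short path). The rescaling of the vertical edge length to $\tfrac{1}{3j}$ is exactly what makes the comparison of path lengths come out in our favor, and getting the constants to line up — so that ``standard path'' as defined is realized and the error is no worse than $2M$ — is the delicate part.
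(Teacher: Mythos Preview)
Your overall strategy is the paper's, but there is a real gap in how you handle the shape of $\alpha$. You decompose $\alpha$ as a downward portion $\alpha_1$, a horizontal portion $\beta$ sitting entirely in the bottom level $\Gamma_m$, and an upward portion $\alpha_2$, and you then assert that ``$\alpha_1$ and the descent of $\sigma$ literally coincide'' because the downward vertical edge from a vertex is unique. But nothing you have said rules out horizontal edges \emph{interspersed} with the downward edges of $\alpha$ (a pattern such as down--horizontal--down is not forbidden by geodesicity, since projecting a horizontal edge one level lower can preserve length). So $\alpha_1$ need not be purely vertical, and the coincidence claim fails. Your appeal to Lemma~\ref{GeoLemma} does not help: that lemma is about geodesic \emph{rays from the origin}, where the argument that horizontal edges can be deleted is special to that situation.

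The paper closes this gap with two points you are missing. First, one shows that in a geodesic every downward edge precedes every upward edge, because an up--horizontal--down segment can be strictly shortened by projecting the horizontal part one level lower and deleting the two flanking vertical edges. Second, and this is what drives the Hausdorff estimate, the \emph{total} number $|B|$ of horizontal edges in $\alpha$, summed over all levels, is at most $M$: projecting every horizontal edge of $\alpha$ down to the bottom level $h$ gives a path in $\Gamma_h$ between $f_{h,\cdot}(x)$ and $f_{h,\cdot}(y)$ of length $\le |B|$, and comparing with $\alpha$ (a geodesic) forces $|B|$ to equal that $\Gamma_h$-distance, which the hyperbolicity hypothesis together with the $\tfrac{1}{3j}$ rescaling then bounds by $M$. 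Once the total horizontal drift of $\alpha$ is at most $M$, its downward part stays within $M$ of the purely vertical descent of the standard path at every level, and the $2M$ bound follows. Your sketch only bounds the horizontal distance at the bottom level, not the total horizontal content of $\alpha$, so the final distance comparison is not yet justified.
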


\begin{proof}
Let $x$ in $\Gamma_m$ be the initial point of $\alpha$, and let $y$ in $\Gamma_n$ be the terminal point. Then let $A$ be the set of all vertical, downward edges of $\alpha$, $B$ the set of all horizontal edges of $\alpha$, and $C$ the set of all vertical, upward edges.

We claim that all downward edges occur in $\alpha$ before all upward edges. Assume the geodesic $\alpha$ goes up one edge, follows a horizontal path in $\Gamma_p$ for some $p$, then goes down a vertical edge. The image of the horizontal segment under $f_{p-1,p}$ in $\Gamma_{p-1}$ is no longer, and so removing the vertical segments while projecting the horizontal path to $\Gamma_{p-1}$ gives a strictly shorter path.

Note that $m-|A|=n-|C|$; call this number $h$. It represents the lowest level that the path $\alpha$ reaches. Now construct a path $\beta$ consisting of $|A|$ downward edges, a horizontal path in $\Gamma_h$ of length $\leq |B|$ from $f_{h,m}(a)$ to $f_{h,n}(b)$, and an upward path of $|C|$ edges.

The horizontal path in the middle can be taken to be the image of all elements of $B$ under the appropriate transition functions. If the distance between the endpoints of this horizontal path was $>M$, $\beta$ could be shortened by adding $j$ more vertical edges to the downward path, then following a minimum length horizontal path (which is shorter than the original horizontal path by at least 1 by hyperbolicity), going up $j$ more vertical edges, and then following the original upward path (recall that each vertical edge now has length $\frac{1}{3j}$). Because $\alpha$ is a geodesic, $\beta$ cannot be shorter than $\alpha$, so the horizontal path has length $\leq M$ and $\beta$ is a standard path.

Thus, we have shown that $\alpha$ consists of a path where all downward segments occur before all upward segments, and that all horizontal segments total no more than $M$ in length.

We now show that $\alpha$ and $\beta$ stay within $2M$ of each other, depending only on the subdivision rule.
The geodesics $\alpha$ and $\beta$ both start at $x$ in $\Gamma_m$, go downward to some points in a level $\Gamma_h$ (with $\alpha$ possibly taking horizontal detours of length $\leq M$), take a horizontal path of length $\leq M$ to some other point in $\Gamma_h$, then go upward to $y$ in $\Gamma_n$ (again with possible detours for $\alpha$ of length $\leq M$). Since distances in $\Gamma_p$ are no greater than distances in $\Gamma_q$ for $p<q$, the path $\beta$ stays within $M+j/3$ of $\alpha$ on the downward segment. Then the endpoints of the last downward edges of $\alpha$ and $\beta$ lie in $\Gamma_h$ and have distance $\leq M$. The horizontal paths in $\Gamma_h$ have length $\leq M$, and so the paths lie within $2M$ of each other at all times. By symmetry, the upward segments of $\alpha$ and $\beta$ remain within $M+j/3$ of each other. Thus, every geodesic is within $2M$ of a geodesic with the same starting points that follows a standard path.
\end{proof}

Using this last lemma, we can show that $\Gamma$ is Gromov hyperbolic by considering triangles of geodesics that follow standard paths. Standard paths are useful, because they allow us to focus on the downward and upward segments. Purely vertical geodesics connecting points in different levels are unique, and so each vertex in a level has a unique downward path coming from it. This uniqueness of vertical geodesics gives our graph a tree-like structure.

The following theorems justify our use of the term `hyperbolic' for a subdivision rule $R$ with respect to an $R$-complex.

\begin{theorem}\label{HyperHausdorff}
Let $R$ be a subdivision rule, and let $X$ be a finite $R$-complex. If $R$ is hyperbolic with respect to $X$, then the history graph $\Gamma=\Gamma(R,X)$ is Gromov hyperbolic.
\end{theorem}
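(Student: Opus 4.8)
The plan is to verify the thin-triangles condition directly, using Lemma~\ref{StandardLemma} to reduce to triangles all of whose sides are standard paths. So let $x$, $y$, $z$ be three vertices of $\Gamma$ and let $\alpha_{xy}$, $\alpha_{yz}$, $\alpha_{xz}$ be standard-path geodesics joining them; by Lemma~\ref{StandardLemma} it suffices to show these form a uniformly thin triangle, since an arbitrary geodesic triangle lies within $2M$ of such a configuration and thinness is preserved up to an additive constant under passing to a $2M$-neighborhood. Because each side is a standard path, it consists of a downward vertical segment, a horizontal segment of length $\le M$, and an upward vertical segment, and the key structural fact I would exploit is the one emphasized in the paragraph before the theorem: purely vertical downward geodesics are \emph{unique}. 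Thus from each of $x$, $y$, $z$ there is a single well-defined descending ray, and the vertical portions of the three sides run along these three rays.

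The core of the argument is then a case analysis on the three "bottom levels" $h_{xy}$, $h_{yz}$, $h_{xz}$ that the three sides reach. First I would observe that the descending ray from $x$ and the descending ray from $y$ stay together (literally coincide, since vertical geodesics are unique) down to some level, then separate; the side $\alpha_{xy}$ must leave the common part no later than a bounded amount above where the rays from $x$ and $y$ diverge, because once the two descending rays have split, any path between them through level $\Gamma_p$ projects down to a horizontal path in $\Gamma_{p-1}$ that is no longer, and the bottom horizontal segment of $\alpha_{xy}$ has length $\le M$; so the split point of the $x$- and $y$-rays is within $M$ levels of $h_{xy}$. The same holds for the other two pairs. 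Now, among the three pairwise split levels for the descending rays $\rho_x,\rho_y,\rho_z$, a standard tripod argument shows two of them agree and the third is no deeper: say $\rho_x$ and $\rho_y$ split from the common trunk at the same level $\ell$ (or within bounded distance), and $\rho_z$ also leaves the trunk at level $\ell$. I would then show each side of the triangle lies in a bounded neighborhood of the union of the other two, by tracking a point on $\alpha_{xy}$: if it is on the upward part near $y$, it lies on $\rho_y$, hence on $\alpha_{yz}$ up to level $h_{yz}$, and the levels match up to an additive error controlled by $M$; if it is on the horizontal bottom segment, it lies within $M$ of the split region, which is within a bounded neighborhood of both $\alpha_{yz}$ and $\alpha_{xz}$; and symmetrically for points near $x$. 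Assembling the cases yields a uniform thinness constant $\delta$ depending only on $M$ (and on $j$, via the altered metric in which vertical edges have length $1/3j$), and then transporting back to the standard metric by the quasi-isometry mentioned in the text gives Gromov hyperbolicity of $\Gamma$ with the usual metric.

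The main obstacle I anticipate is the bookkeeping in the case where the three bottom levels $h_{xy}$, $h_{yz}$, $h_{xz}$ are genuinely different and the horizontal segments of the three standard paths sit at different depths: one must argue that the descending rays from $x$, $y$, $z$ form a tripod up to additive error $O(M)$ and that the three horizontal segments all live within $O(M)$ levels of the tripod's branch point. This requires repeatedly invoking the "projecting a horizontal path downward does not lengthen it" principle together with the length bound $M$ on horizontal segments of standard paths, and carefully chaining these estimates without the additive constants compounding uncontrollably. A secondary, more routine point is making precise the claim that a $2M$-neighborhood of a $\delta$-thin triangle in a graph still behaves like a thin triangle for the purposes of the definition — this is standard (passing between the "thin triangles" and "Gromov product" formulations costs only a bounded constant), but it should be stated cleanly so the reduction via Lemma~\ref{StandardLemma} is rigorous.
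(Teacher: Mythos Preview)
Your overall strategy---reduce to standard-path triangles via Lemma~\ref{StandardLemma} and then verify thinness directly---is exactly the paper's. But there is a genuine misconception in your thinness argument. You assert that the descending rays $\rho_x$ and $\rho_y$ ``literally coincide'' down to some level and then split. This is false: the history graph is not a tree, and downward rays from two distinct vertices need not ever share a vertex above the origin. What uniqueness of vertical geodesics \emph{does} buy you is that the two sides emanating from the \emph{same} vertex $x$, namely $\alpha_{xy}$ and $\alpha_{xz}$, share their downward segments (both run along $\rho_x$) until the higher of their two turning levels. There is no literal tripod and no split point for $\rho_x$ versus $\rho_y$; your subsequent bookkeeping about ``the split point of the $x$- and $y$-rays is within $M$ levels of $h_{xy}$'' is therefore built on a false premise.

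The correct substitute, and what the paper actually uses, is that at the turning level $H(x,y)$ of $\alpha_{xy}$ the projections of $x$ and $y$ are at horizontal distance $\le M$ (this is precisely the length bound on the horizontal segment of a standard path), hence at every lower level as well since projection does not increase horizontal distance. Ordering the three turning levels as $H(x,y)\ge H(x,z)\ge H(y,z)$, the side $\alpha_{xy}$ is then within $M$ of the union of the other two (it runs along $\rho_x$, then $\le M$ horizontally, then along $\rho_y$). For the remaining pair, the triangle inequality on projections at level $H(x,z)$ gives horizontal distance $\le 2M$ between the projections of $y$ and $z$, so the portion of the geodesic $\alpha_{yz}$ below level $H(x,z)$ has total length $\le 2M$ (otherwise it would not be minimal), and thinness follows with $\delta=6M$ after adding back the $2M$ from Lemma~\ref{StandardLemma}. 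If you replace your ``split points'' and tripod picture everywhere by these projection-distance bounds, your argument becomes the paper's and the compounding-of-constants worry you flag in your last paragraph disappears.
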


\begin{proof}

\begin{figure}
\scalebox{.35}{\includegraphics{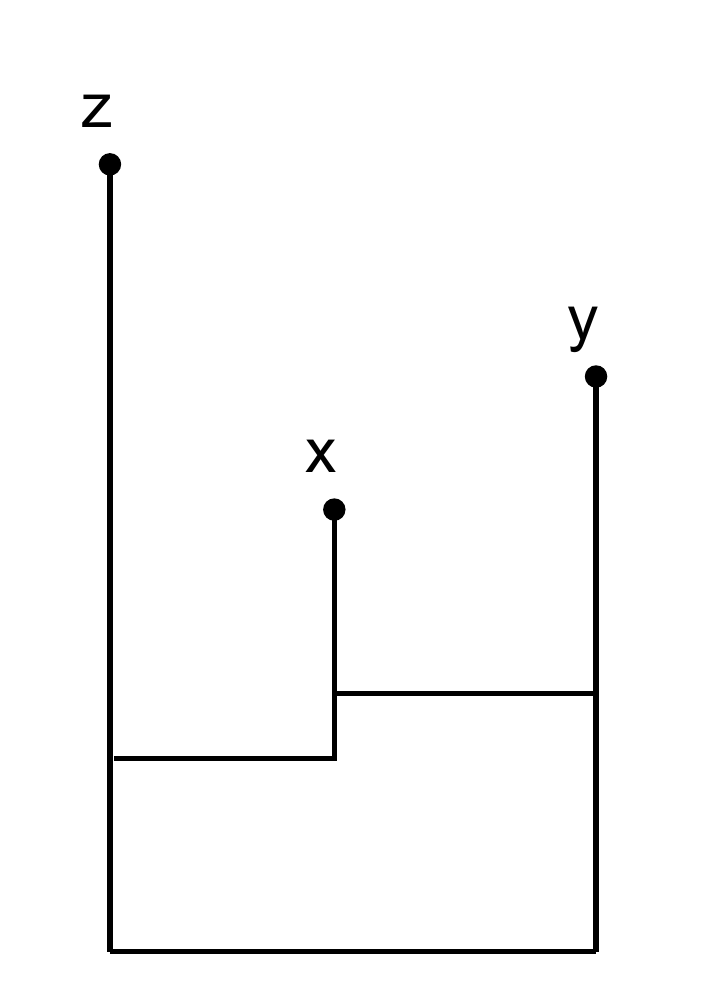}} \caption{A geodesic triangle in the history graph.}
\label{GeodesicTriangle}
\end{figure}

We will show that geodesic triangles in $\Gamma$ are $\delta$-thin. By Lemma \ref{StandardLemma}, we can assume that the geodesics in a triangle follow standard paths by changing distances a bounded amount. Now, let $x,y,$ and $z$ be points in $\Gamma$ with standard paths $\alpha_{xy},\alpha_{xz},$ and $\alpha_{yz}$ connecting them. Because these are standard paths, they are vertical except for a horizontal portion lying entirely in some level of the graph. Let $H(x,y),H(x,z)$ and $H(y,z)$ be the height of the horizontal portions of the corresponding geodesics (so, for instance, the horizontal part of $\alpha_{xy}$ lies in $\Gamma_{H(x,y)})$.

Without loss of generality, assume $H(x,y) \geq H(x,z) \geq H(y,z)$ (see Figure \ref{GeodesicTriangle}). We first show that $\alpha_{xy}$ remains close to the other two geodesics. It is the same downward path as $\alpha_{xz}$ until they reach $\Gamma_{H(x,y)}$; it then follows a path of length $\leq M$ (where $M$ is the constant from the definition of hyperbolicity), then follows an upward segment to $z$ that is the same as that followed by $\alpha_{yz}$. Thus, $\alpha_{xy}$ is within $M$ of the other paths at all times.

Now, the horizontal path of $\alpha_{xy}$ actually connects the images of $x$ and $y$ under the appropriate transition functions. So the intersections of $\Gamma_{H(x,y)}$ with the vertical segments of $\alpha_{xz}$ and $\alpha_{yz}$ are no more than $M$ apart, and their vertices in lower levels are no further apart. Thus, they are within $M$ of each other until $H(x,z)$.

In $\Gamma_{H(x,z)}$, the projections of $x$ and $z$ are $\leq M$ apart, and so are those of $y$ and $x$ (because $H(x,y)\geq H(x,z)$). Thus, the projections of $y$ and $z$ are no more than $2M$ apart. Thus, the part of $\alpha_{yz}$ which goes down, over, and up from the image of $y$ in this level to the image of $z$ in this level must be at most $2M$ in length; otherwise, the path would not be minimal length. Thus, it is never more than $3M$ away from $\alpha_{xz}$ (in fact, by symmetry, it is no more than $2M$ away). Finally, both end with the same upward segment from the projection of $z$ in $\Gamma_H(x,z)$ up to $z$ itself. Thus, all geodesics in the triangle are no more than $2M$ apart at any point. Our assumption that our geodesics were standard paths shifted each geodesic by no more than $\epsilon=2M$ (from Lemma \ref{StandardLemma}). Thus, every edge in a geodesic triangle is within $\delta=2M+2\epsilon=6M$ of the union of the other edges, and our graph $\Gamma$ is Gromov hyperbolic.
\end{proof}

Recall that the projection function $f_n$ sends each point $x$ of $\Lambda$ to the vertex of $\Gamma_n$ corresponding to the minimal cell of $\Lambda_n$ that contains $x$.

\begin{definition}
The \textbf{canonical quotient} of $\Lambda$ is the quotient given by the equivalence relation $\sim$, where $x\sim y$ if $d_n(f_n(x),f_n(y))$ is bounded as $n\rightarrow \infty$. It is denoted $\widehat{\Lambda}$.
\end{definition}

\begin{theorem}\label{HyperQuotient}
Let $R$ be a subdivision rule and let $X$ be a finite $R$-complex. If $\Gamma(R,X)$ is $\delta$-hyperbolic, then $R$ is hyperbolic with respect to $X$ and the canonical quotient $\widehat{\Lambda}$ is homeomorphic to the Gromov boundary $\partial \Gamma$ of the history graph. The preimage of each point in the quotient is connected, and its combinatorial diameter in each $\Lambda_n$ has an upper bound of $\delta+1$.
\end{theorem}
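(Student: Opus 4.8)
The plan is to prove this in three stages: first, that $\delta$-hyperbolicity of $\Gamma$ forces $R$ to be hyperbolic with respect to $X$; second, that the map sending a point of $\partial\Gamma$ to an equivalence class in $\widehat{\Lambda}$ (via Lemma \ref{GeoLemma}) is a well-defined homeomorphism; third, the connectedness and diameter bounds on the preimages. For the first stage, I would argue by contradiction: if $R$ is not hyperbolic with respect to $X$, then for every choice of $M,j$ there are points $x,y\in\Gamma_{n+j}$ at finite distance with $d_n(f_{n,n+j}(x),f_{n,n+j}(y))\ge M$ but $d_{n+j}(x,y)\le d_n(f_{n,n+j}(x),f_{n,n+j}(y))$. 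Taking $j$ large and using the alternate metric (vertical edges of length $\tfrac{1}{3j}$), I would build a geodesic bigon or thin triangle whose sides fail to stay close: a standard-path-like path from $x$ down $j$ levels, across, and up $j$ levels to $y$ versus the horizontal geodesic at level $n+j$. The horizontal geodesic at level $n+j$ stays at distance $n+j$ from $O$, while the detour reaches level $n$; if these two paths existed as geodesics they would violate thinness of the triangle with apex $O$, since the midpoint of the horizontal geodesic would be far (in the count metric, distance $\ge M/2$ after scaling appropriately) from both other sides. Choosing $M$ larger than any multiple of $\delta$ gives the contradiction.

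For the second stage, by Lemma \ref{GeoLemma} every point of $\partial\Gamma$ is represented by a geodesic ray $\gamma_x$ coming from a point $x\in\Lambda$ (up to the bounded ambiguity of radius $2$), with $\gamma_x(n)=f_n(x)$. Two rays $\gamma_x,\gamma_y$ are equivalent in $\partial\Gamma$ (Gromov product $(\gamma_x(t),\gamma_y(t))_O\to\infty$) exactly when they stay boundedly close, which by the formula $(\gamma_x(n),\gamma_y(n))_O = n - \tfrac{1}{2}d_n(f_n(x),f_n(y))$ is equivalent to $d_n(f_n(x),f_n(y))$ being bounded — i.e.\ $x\sim y$ in the canonical quotient. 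So the natural map $\partial\Gamma \to \widehat{\Lambda}$ is a well-defined bijection. For continuity and openness I would compare the basis $U(\gamma,r)$ of $\partial\Gamma$ with the quotient topology on $\widehat\Lambda$: a neighborhood of $[\gamma_x]$ in $\partial\Gamma$ corresponds to the set of $[y]$ with $d_n(f_n(x),f_n(y))$ small for all large $n$, and one must check this is a neighborhood in the quotient topology on $\widehat\Lambda$ inherited from $\Lambda\subseteq\Lambda_n$. This uses compactness of $\Lambda$ and the fact that the minimal cells $f_n(x)$ shrink down to $x$; since $\partial\Gamma$ and $\widehat{\Lambda}$ are both compact Hausdorff (the latter needs checking — the equivalence relation is closed because "$d_n$ bounded" is built from continuous projections on a compact space), a continuous bijection between them is automatically a homeomorphism.

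For the third stage, fix a point $\xi\in\widehat\Lambda$ with preimage $P\subseteq\Lambda$ and consider $f_n(P)\subseteq\Gamma_n$. If $x,y\in P$ then $d_n(f_n(x),f_n(y))$ is bounded; I would use thinness of the triangle $O, \gamma_x(N), \gamma_y(N)$ for large $N$ to show the bound is in fact $\le \delta+1$ for all $n$ — the two vertical rays $\gamma_x,\gamma_y$ come in from infinity and, by $\delta$-thinness applied to an ideal-style triangle (or to a large finite triangle and taking limits), the points $f_n(x)$ and $f_n(y)$ lie within $\delta$ of a common vertical segment, forcing $d_n(f_n(x),f_n(y))\le\delta+1$ once one accounts for the edge lengths. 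For connectedness of $P$: given $x,y\in P$, the bounded chains of cells of $\Lambda_n$ joining $f_n(x)$ to $f_n(y)$ (of uniformly bounded length $\le\delta+1$) give, as in the proof of Theorem \ref{Quasi3Thm}, a nested sequence of connected compact sets (the unions of those bounded chains) whose intersection is a connected compact set containing both $x$ and $y$ and contained in $P$; hence $P$ is connected.

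I expect the main obstacle to be the second stage — specifically, verifying that the bijection $\partial\Gamma\to\widehat\Lambda$ is continuous, i.e.\ matching the metric/Gromov-product topology on $\partial\Gamma$ with the quotient-of-subspace topology on $\widehat\Lambda$. The inequality $d_n(f_n(x),f_n(y))$ small $\Rightarrow$ $x,y$ close in $\Lambda$ requires that cells do not become "long and thin" in an uncontrolled way as $n\to\infty$, which must be extracted from finiteness of the subdivision rule together with a compactness argument; this is the delicate point, whereas the hyperbolicity-of-$R$ direction and the diameter bound are fairly direct consequences of $\delta$-thinness of triangles with one vertex at $O$.
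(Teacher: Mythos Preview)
Your three-stage plan matches the paper's, and stages one and three are essentially correct (the paper's Stage~1 is direct rather than by contradiction, choosing $M\ge\delta$ and $j>3M/2+2\delta+1$ and arguing thinness fails if $d_{n+j}(x,y)=M$; it does not use the rescaled metric here, but your version would work).

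The real divergence is in Stage~2. Two points:

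First, the displayed formula $(\gamma_x(n),\gamma_y(n))_O = n-\tfrac12 d_n(f_n(x),f_n(y))$ is not correct: the Gromov product uses the ambient distance $d$ in $\Gamma$, not the horizontal metric $d_n$ on $\Gamma_n$, and these can differ substantially (a short down-across-up path can connect points with large $d_n$). Only the inequality $d\le d_n$ is automatic, so only one direction of your ``iff'' follows from the formula; the other direction (Gromov product $\to\infty$ implies $d_n$ bounded) genuinely needs the $\delta$-thinness argument you sketch in Stage~3.

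Second, you set up the bijection as a map $\partial\Gamma\to\widehat\Lambda$ and then worry about its continuity---this is exactly the ``long thin cells'' obstacle you flag, and it is real if you insist on that direction. The paper sidesteps it entirely: it builds the map the other way, $p:\Lambda\to\partial\Gamma$, $x\mapsto[\gamma_x]$. Continuity of $p$ is easy (the open star of $x$ in $\Lambda_n$ lands in any prescribed $U(\gamma_x,r)$ once $n$ is large), $\Lambda$ is compact, $\partial\Gamma$ is Hausdorff, and $p$ is surjective by Lemma~\ref{GeoLemma}; hence $p$ is a closed quotient map, and once you check its fibers are the $\sim$-classes you get the homeomorphism $\widehat\Lambda\cong\partial\Gamma$ for free. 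You never need to verify that $\widehat\Lambda$ is Hausdorff or that cells shrink nicely. Reorienting your map this way removes what you correctly identified as the delicate step.
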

\begin{proof}
Assume that $\Gamma(R,X)$ is $\delta$-hyperbolic. Let $M\geq \delta$ and $j>3M/2+2\delta+1$ be positive integers. We claim that any two points $x,y$ of finite distance in some $\Gamma_{n+j}$ satisfying $d_n(f_{n,n+j}(x),f_{n,n+j}(y))=M$ also satisfy $d_{n+j}(x,y)>M$.

To see this, consider the geodesic triangle $\Delta$ whose vertices are the origin $O$ and the points $x$ and $y$. Assume by way of contradiction that $d_{n+j}(x,y)=M$. This implies that the projections of $x$ and $y$ in every level from $n$ to $n+j$ are $M$ apart. Then the geodesic segment between $x$ and $y$ lies entirely above $\Gamma_{n+j-M/2}$, since going down more than $M/2$ and returning again gives a path of length greater than the distance between $x$ and $y$.

Now, let $k$ be an integer strictly between $n+M$ and $n+j-M/2-\delta$. Then by the argument in the preceding paragraph, the projection $f_{k,n+j}(x)$ is not within $\delta$ of the segment between $x$ and $y$. Thus, it must be within $\delta$ of the segment from $O$ to $y$. Consider a shortest path $\alpha$ in $\Gamma$ from $f_{k,n+j}(x)$ to the segment from $O$ to $y$. The path $\alpha$ must have length $l(\alpha)<\delta\leq M$.

Let $\Gamma_{n'}$ be the lowest level that $\alpha$ reaches. Because $l(\alpha)<M$, we know that $n'>k-M> n$. Consider the projection of $\alpha$ to $\Gamma_{n'}$. The image of $\alpha$ under the projection is a path $\alpha'$ of length $l(\alpha')\leq l(\alpha) < M$. But this projected path $\alpha'$ lies in $\Gamma_{n'}$ and connects the projections $f_{n',n+j}(x)$ and $f_{n',n+j}(y)$. But because $n<n'<n+j$, we know that $d_{n'}(f_{n',n+j}(x), f_{n',n+j}(y))=M$, which is a contradiction. Thus, our assumption that $d_{n+j}(x,y)=M$ must be wrong, and it must be true that $d_{n+j}(x,y)>M$.

Thus, we have proved our claim that any two points $x,y$ in some $\Gamma_{n+j}$ of finite distance in $\Gamma_{n+j}$ which satisfy $d_n(f_{n,n+j}(x),f_{n,n+j}(y))=M$ also satisfy $d_{n+j}(x,y)>M$.

Now, we show that $R$ is hyperbolic with respect to $X$. Let $w$ and $z$ be two vertices of finite distance in some $\Gamma_{n+j}$. Assume that $d_{n}(f_{n,n+j}(w),f_{n,n+j}(z))>M$. Let $p$ be a point that lies on a shortest-length path in $\Gamma_{n+j}$ from $w$ to $z$ such that $$d_{n}(f_{n,n+j}(p),f_{n,n+j}(z))=M.$$ Then
\begin{align*}
d_{n+j}(w,z) &= d_{n+j}(w,p)+d_{n+j}(p,z)
\\&\geq d_{n}(f_{n,n+j}(w),f_{n,n+j}(p))+ d_{n+j}(p,z)
\\&> d_{n}(f_{n,n+j}(w),f_{n,n+j}(p))+d_{n}(f_{n,n+j}(p),f_{n,n+j}(z))
\\&\geq d_{n}(f_{n,n+j}(w),f_{n,n+j}(z))
\end{align*}

which shows that $R$ is hyperbolic with respect to $X$.

We now prove the second statement in the theorem, that the canonical quotient $\widehat{\Lambda}$ is homeomorphic to $\partial \Gamma$. We do this by showing that a certain quotient map from $\Lambda$ to $\partial \Gamma$ factors through $\widehat{\Lambda}$. The map $p:\Lambda\rightarrow \partial \Gamma$ sending each point $x$ in $\Lambda$ to the equivalence class of its corresponding geodesic (from Lemma \ref{GeoLemma}) is surjective, and it goes from a compact space $\Lambda$ to a Hausdorff space $\partial G$, so we need only show that it is continuous and that its fibers are the equivalence classes which define $\widehat{\Lambda}$.

To show that $p$ is continuous, we need to show that the preimage of any basis element of $\partial \Gamma$ is open. Let $x$ denote a point of $\Lambda$. Let $\gamma$ denote the geodesic ray from Lemma \ref{GeoLemma} corresponding to $x$ and let $r$ denote a real number. There is a basis for $\partial \Gamma$ consisting of sets of the form $U(\gamma,r)=\{[\nu]\in \partial\Gamma|(\gamma(t),\nu(t))_O\geq r\text{ for $t$ sufficiently large}\}$. We need only show that $p^{-1}(U(\gamma,r))$ contains an open neighborhood of $x$.

\begin{figure}
\scalebox{.35}{\includegraphics{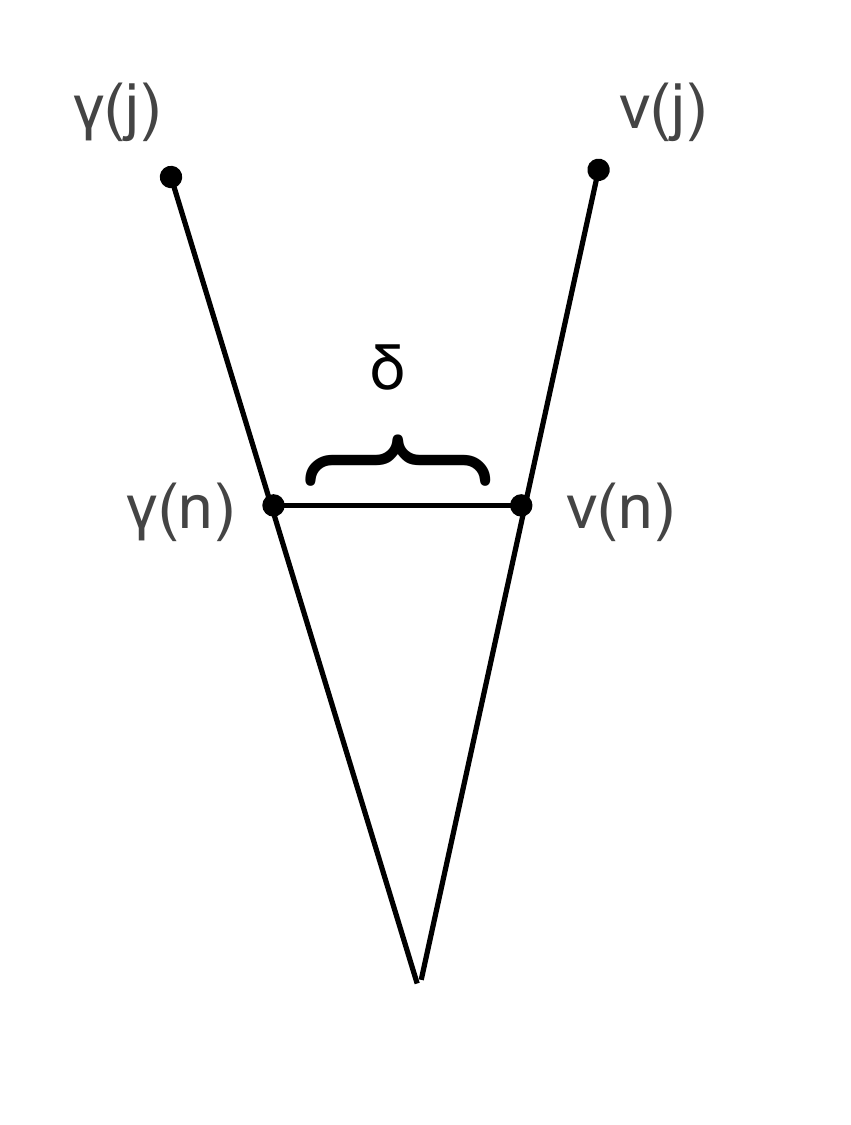}} \caption{If two geodesic rays are within $\delta$ of each other at level $n$, then the Gromov product of their endpoints at later levels does not get smaller than $n-\delta$.}
\label{TwoRays}
\end{figure}

Given an integer $k\geq \delta$, choose an integer $n\geq r+k$. Consider the set $W_{k}(\gamma,n)$ consisting of points $y\in \Lambda$ whose projection $f_n(y)$ lies within $k$ of $\gamma(n)$ in $\Gamma_n(R,X)$. Because $k$ is at least 1, the set $W_{k}(\gamma,n)$ contains an open set about $x$, namely, the \textbf{open star} about $x$ in $\Lambda_n$, consisting of the union of all open cells in $\Lambda_n$ whose closure contains $x$. We now show that $W_{k}(\gamma,n)$ lies in $p^{-1}(U(\gamma,r))$.

Consider a point of $W_{k}(\gamma,n)$, and let $\nu$ be the corresponding geodesic ray from the origin. Then $\nu(n)$ lies within $k$ of $\gamma(n)$.

Then for $j\geq n$,
$$(\gamma(j),\nu(j))_O=\frac{(d(O,\gamma(j))+d(O,\nu(j))-d(\gamma(j),\nu(j))}{2}$$

$$=\frac{d(O,\gamma(n))+d(O,\nu(n))}{2}+\frac{d(\gamma(n),\gamma(j))+d(\nu(n),\nu(j))-d(\gamma(j),\nu(j))}{2}$$

$$=n+\frac{d(\gamma(n),\gamma(j))+d(\nu(n),\nu(j))-d(\gamma(j),\nu(j))}{2}$$

But by the triangle inequality, $d(\nu(n),\nu(j))\geq d(\gamma(n),\nu(j))-d(\nu(n),\gamma(n))$. Therefore,

$$(\gamma(j),\nu(j))_O \geq n+\frac{d(\gamma(n),\gamma(j))+d(\gamma(n),\nu(j))-d(\gamma(j),\nu(j))}{2}-\frac{d(\nu(n),\gamma(n))}{2}$$
$$\geq n+(\nu(j),\gamma(j))_{\gamma(n)}-k$$
$$\geq n-k\geq r.$$

Thus, $[\nu]\in U(\gamma,r)$, and so all geodesic rays corresponding to points in $W_k(\gamma,n)$ get mapped to $U(\gamma,r)$, including the open star about $x$, and so the map $p$ is continuous as described earlier. Furthermore, since $k$ is an arbitrary integer bounded below by $\delta$, we can choose it to satisfy $\delta \leq k < \delta+1$. This proves that the combinatorial diameter of $p^{-1}([\gamma])\subseteq W_k(\gamma,n)$ is bounded above by $\delta+1$ in each $\Lambda_n(X)$.

We have $\mathop{\bigcap}\limits_n W_k (\gamma,n)\subseteq p^{-1}([\gamma])$. This implies that $p$ induces a surjective continuous map from $\widehat{\Lambda}$ to $\partial \Gamma$.

To see that this induced map is injective, again choose an integer $k\geq \delta$. Consider a point that does not lie in some $W_{k}(\gamma,N)$, and let $\alpha$ be its corresponding geodesic ray. Then for all $n\geq N$, $d_n(\gamma(n),\alpha(n))> k \geq \delta$. If this distance were bounded above, then we could choose a sufficiently large integer $M$ such that the geodesic triangle $\Delta$ with vertices the origin, $\gamma(M)$ and $\alpha(M)$ would not be $\delta$-thin, just as in the first part of the proof. Thus, the geodesic rays diverge and $[\alpha]\neq[\gamma]$. This completes the proof that $\widehat{\Lambda}$ is homeomorphic to $\partial \Gamma$.

Each of the sets $W_{k} (\gamma,n)$ with $k \geq \delta$ is connected, but $p^{-1}([\gamma])=\mathop{\bigcap}\limits_n W_{k} (\gamma,n)\subseteq \mathop{\bigcap}\limits_n \overline{W_{k} (\gamma,n)}\subseteq \mathop{\bigcap}\limits_n W_{k+1} (\gamma,n)\subseteq p^{-1}([\gamma])$.
Thus, the preimage of each point in the quotient is a connected set, as it is a nested intersection of compact connected sets.
\end{proof}

\section{Combable spaces and the isoperimetric inequality}\label{Combable}

In this section, we show that a group quasi-isometric to a history graph has a quadratic isoperimetric inequality. This will eventually be used to show that Nil and Sol manifolds cannot be modeled by finite subdivision rules.

We first define what it means for a group to satisfy an isoperimetric inequality \cite{epstein1992word}:

\begin{definition}
Let $G$ be a group with generating set $A=A^{-1}$ and relations $R$. Recall that the \textbf{length} of a reduced word $w$ in the free group $\langle A \rangle$ generated by $A$ is the number of elements required to write it. The \textbf{area} of a word $w$ in $\langle A \rangle$ that maps to the identity of $G$ is the smallest number $n$ of relators $\{r_i\}$ and words $\{g_i\}$ such that $w=\mathop{\Pi}\limits_{i=1}^n g_i^{-1}(r_i)g_i$.

The \textbf{isoperimetric function} is the function $f(n)=\max\{$area$(w)|$ $w$ maps to the identity in $G$ and length of $w=n\}$. Although the isoperimetric function itself is not a quasi-isometry invariant, its rate of growth is an invariant \cite{epstein1992word} (except in the case of constant growth and linear growth, which are equivalent to each other).

A group has a \textbf{quadratic isoperimetric inequality} if the isoperimetric function is bounded above by a quadratic polynomial.

\end{definition}

We attack the isoperimetric function for history graphs indirectly, by means of combings.

\begin{definition}
Let $P$ be the set of all paths in $\Gamma$ of the form $\gamma:[0,b]\rightarrow \Gamma$ with $\gamma(0)=O$. Let the \textbf{endpoint} of $\gamma$ be $\gamma(b)$. Assume that $\gamma_i$ is such a path with domain $[0,b_i]$ for $i=1,2$. Extend the domains of the $\gamma_i$ by letting $\gamma_i(t)=\gamma_i(b)$ for $t\geq b$ and for $i=1,2$. Define a metric on $P$ by letting $d_P(\gamma_1,\gamma_2)=|b_1-b_2| + \mathop{\max}\limits_{0\leq t< \infty}d (\gamma_1(t),\gamma_2(t))$.

Finally, the \textbf{endpoint map} sends a path $\gamma$ with domain $[0,b]$ to the endpoint $\gamma(b)$.
\end{definition}

\begin{definition} A space is \textbf{combable} if there is a right inverse to the endpoint map that is a quasi-isometric embedding.
\end{definition}

Combable groups have quadratic isoperimetric inequality (Theorem 3.6.6 of \cite{epstein1992word}). Being combable is a quasi-isometry invariant of metric spaces (Theorem 3.6.4 of \cite{epstein1992word}).
\begin{theorem}\label{CombableThm}
History graphs of subdivision rules are combable. Thus, groups which are quasi-isometric to history graphs are combable and have a quadratic isoperimetric inequality.
\end{theorem}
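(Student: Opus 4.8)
The plan is to construct an explicit right inverse to the endpoint map of $\Gamma$ and show it is a quasi-isometric embedding. The natural candidate is the map $\sigma$ that sends a vertex $v$ of $\Gamma$ to the canonical vertical geodesic from $O$ to $v$, i.e. $\sigma(v) = \gamma_v$ where $\gamma_v$ is the unique path of vertical edges joining $O$ to $v$ (this exists and is unique by the discussion after Lemma \ref{GeoLemma}: every vertex of $\Gamma_{n+1}$ is joined to exactly one vertex of $\Gamma_n$ by a vertical edge, so iterating downward produces a unique vertical path to $O$, and this path is a geodesic since a vertex of $\Gamma_n$ is at distance exactly $n$ from $O$). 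We extend $\sigma$ to the origin by the constant path, and if we want a combing defined on all of $\Gamma$ rather than its vertices, we send a point on an edge to the path reaching the nearer endpoint, adjusting by a bounded amount. Since the endpoint of $\gamma_v$ is $v$, this is genuinely a right inverse to the endpoint map.

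The key step is the quasi-isometric embedding estimate: for vertices $v \in \Gamma_m$ and $w \in \Gamma_n$ I must bound $d_P(\gamma_v,\gamma_w) = |m-n| + \max_t d(\gamma_v(t),\gamma_w(t))$ above and below by affine functions of $d_\Gamma(v,w)$. The lower bound is easy: $d_P(\gamma_v,\gamma_w) \geq |m-n| = |d_\Gamma(O,v) - d_\Gamma(O,w)|$, but more usefully $d_P(\gamma_v,\gamma_w) \geq \max_t d(\gamma_v(t),\gamma_w(t)) \geq d(\gamma_v(m),\gamma_w(m)) $ when $m \le n$... — in any case $d_P \geq d_\Gamma(v,w)/(\text{const})$ follows because the two paths pass through $v$ and $w$ at comparable times. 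For the upper bound, the idea is to compare $\gamma_v$ and $\gamma_w$ against a geodesic from $v$ to $w$: by Lemma \ref{GeoLemma} and the uniqueness of vertical geodesics, a shortest path from $v$ to $w$ descends vertically from $v$, takes a short horizontal detour, and rises vertically to $w$; the vertical descent portion from $v$ runs along $\gamma_v$ itself (by uniqueness), and symmetrically for $w$. So at each time $t$, the points $\gamma_v(t)$ and $\gamma_w(t)$ differ from the corresponding point on the $v$--$w$ geodesic by a controlled amount, giving $\max_t d(\gamma_v(t),\gamma_w(t)) \leq d_\Gamma(v,w) + C$ and $|m - n| \le d_\Gamma(v,w)$, hence $d_P(\gamma_v,\gamma_w) \leq 2 d_\Gamma(v,w) + C$.

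The main obstacle I anticipate is justifying that the vertical descent portions of a $v$--$w$ geodesic actually coincide with $\gamma_v$ and $\gamma_w$, and controlling the "horizontal detour in between" without assuming $R$ is hyperbolic with respect to $X$ — the clean standard-path description of geodesics from Lemma \ref{StandardLemma} is only available in the hyperbolic case, and Theorem \ref{CombableThm} is asserted for \emph{all} subdivision rules. So the real work is to show directly, for an arbitrary subdivision rule, that any geodesic between two vertices of $\Gamma$ consists of a downward vertical segment, then a horizontal segment within a single level, then an upward vertical segment (the "downward edges precede upward edges" argument from the proof of Lemma \ref{StandardLemma} uses only that horizontal distances do not increase under transition maps $f_{p-1,p}$, which holds in general), and that the downward segments are forced to lie along the canonical vertical rays by uniqueness of vertical geodesics. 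Once that structural fact is in hand, the synchronized-fellow-traveling estimate above goes through, $\sigma$ is a quasi-isometric embedding, so $\Gamma$ is combable; the final sentence then follows immediately since combability is a quasi-isometry invariant and combable groups satisfy a quadratic isoperimetric inequality (Theorems 3.6.4 and 3.6.6 of \cite{epstein1992word}).
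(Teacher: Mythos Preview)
Your proposal is correct and defines the same combing as the paper (the unique vertical geodesic from $O$ to each vertex), with the same lower bound argument. The difference lies only in how you obtain the upper bound $\max_t d(\gamma_v(t),\gamma_{w}(t)) \le C\, d(v,w)$. You plan to invoke the down--horizontal--up structure of geodesics from the proof of Lemma~\ref{StandardLemma}, and you correctly observe that the ``downward edges precede upward edges'' part of that argument needs no hyperbolicity assumption; your route then goes through, with one caveat: what that argument actually produces is \emph{some} geodesic from $v$ to $w$ of the down--horizontal--up form (a general geodesic may still interleave horizontal edges among its downward ones), but that is all you need.

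The paper sidesteps this structural analysis entirely and gets the sharper bound $\max_t d(\gamma_v(t),\gamma_{v'}(t)) \le d(v,v')$ in one line. The reason is that for each integer $t$ the map $r_t$ fixing levels $\le t$ and sending each higher vertex to its image under the transition function $f_{t,s}$ is $1$-Lipschitz on $\Gamma$: horizontal edges above level $t$ map to edges or points because transition functions preserve incidence of cells, and vertical edges above level $t$ collapse because a vertical edge joins a vertex to its own projection. Since $r_t(v)=\gamma_v(t)$ and $r_t(v')=\gamma_{v'}(t)$ (with the extended-constant convention once a path has reached its endpoint), the inequality is immediate. So the ``main obstacle'' you flag never actually arises: no lemma about the shape of $v$--$w$ geodesics is needed.
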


\begin{proof}
Send every vertex $v$ of the history graph to the unique geodesic $\gamma_v$ from the origin to $v$, parametrized by arc length. Let $v\in \Gamma_b$ and $v'\in \Gamma_b'$. Then $|b-b'|$ and $\mathop{\max}\limits_{0\leq t< \infty} d(\gamma_v(t),\gamma_{v'}(t))$ are both at most $d(v,v')$. Thus, $d_P(\gamma_v,\gamma_{v'})\leq 2d(v,v')$.

Conversely, if the distance between two paths $\gamma_v$ and $\gamma_{v'}$ is $K$, then the distance between the endpoints is at most $K$. Thus, $d(v,v')\leq d_P(\gamma_v,\gamma_{v'})$.

We can extend the map $v \mapsto \gamma_v$ to all points of $\Gamma$ by sending each point $x$ to a geodesic $\gamma_x$ from the origin to $x$ (parametrized by arc length). This path is unique except for midpoints of horizontal edges, where we can choose from two paths that are near to each other in the metric on the path space. Then each point $x$ is within a distance of $1/2$ from some vertex $v$, and the path assigned to $x$ is within a distance of 1 from the path assigned to $v$. Thus,

$$d(x,x')\leq d_P(\gamma_x,\gamma_{x'})\leq d_P(\gamma_v,\gamma_{v'})+2\leq 2d(v,v')+2\leq 2d(x,x')+4$$

Thus, the map sending $x$ to $\gamma_x$ is a right-inverse for the endpoint map and is a quasi-isometric embedding.
\end{proof}

\section{Classification of subdivision rules for low-dimensional geometries}\label{ClassificationSection}

In this section, we use the results of the earlier sections to find conditions on subdivision rules that will distinguish one low-dimensional geometric group from another. We give more general characterizations when possible.

\subsection{Compact geometries: $\mathbb{S}^2,\mathbb{S}^3$}

All compact spaces are quasi-isometric to each other. This is the geometry of finite groups.

\textbf{Example}: Let $R$ be a 0-dimensional subdivision rule with one tile type $A$ consisting of a single ideal point which subdivides into another single $A$ tile. Let $X$ be the $R$-complex consisting of a single $A$ tile. Then $\Gamma(R,X)$ is just a single vertex, the origin $O$.

\textbf{Example}: Building on the previous example, any complex with only ideal tiles has a history graph consisting of a single vertex.

There are many other subdivison rules and complexes with this geometry, which we can classify by the following:

\begin{theorem}
Let $R$ be a subdivision rule and let $X$ be an $R$-complex. Then $\Gamma(R,X)$ is compact if and only if $\Lambda(R,X)$ is empty.
\end{theorem}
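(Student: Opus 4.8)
The plan is to prove the two directions separately, using the structural facts about the history graph established earlier. Recall that $\Gamma(R,X)$ is compact if and only if it is finite (it is a connected graph with the path metric, hence discrete, so compactness is equivalent to having finitely many vertices and edges). So the real content is: $\Gamma(R,X)$ has finitely many vertices $\iff$ $\Lambda(R,X)=\emptyset$.

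For the easy direction, suppose $\Lambda$ is nonempty. Pick a point $x\in\Lambda$. By Lemma \ref{GeoLemma}, $x$ corresponds to an infinite geodesic ray $\gamma_x$ based at $O$, whose $n$th vertex is $f_n(x)\in\Gamma_n$. Since the distance from $O$ to $f_n(x)$ is exactly $n$, the vertices $f_0(x),f_1(x),f_2(x),\dots$ are pairwise distinct, so $\Gamma$ has infinitely many vertices and is not compact. For the other direction, suppose $\Lambda=\emptyset$. I want to conclude that only finitely many levels $\Lambda_n$ are nonempty, and that each is finite. Finiteness of each individual $\Lambda_n$ follows because $X$ is a finite complex and $R$ is a finite subdivision rule, so $R^n(X)$ has finitely many cells for every $n$; hence each $\Gamma_n$ is a finite graph. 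The key step is to show that $\Lambda_N=\emptyset$ for all sufficiently large $N$ — equivalently, that the number of non-ideal cells does not persist forever.

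Here is the argument for that key step. Suppose, for contradiction, that $\Lambda_n\neq\emptyset$ for every $n$; equivalently, every $\Gamma_n$ has at least one vertex. Build a rooted tree whose level-$n$ vertices are the vertices of $\Gamma_n$ (i.e. the non-ideal cells of $R^n(X)$), with each vertex $v$ of $\Gamma_{n+1}$ joined to its unique parent $f_{n,n+1}(v)$ in $\Gamma_n$, and all of $\Gamma_0$ joined to a root; this is exactly the subgraph of vertical edges of $\Gamma$. By hypothesis this tree has vertices at every level $n$, and each level is finite (as noted above), so it is an infinite, locally finite rooted tree. By König's lemma it contains an infinite descending branch $v_0, v_1, v_2, \dots$ with $v_n\in\Gamma_n$ and $v_{n+1}$ a child of $v_n$. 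Letting $K_n$ be the non-ideal cell of $R^n(X)$ corresponding to $v_n$, the relation "$v_{n+1}$ is a child of $v_n$" gives $K_{n+1}\subseteq K_n$ (this is exactly the nesting used in the proof of Lemma \ref{GeoLemma}). Each $K_n$ is compact and nonempty, so $\bigcap_n K_n\neq\emptyset$; pick a point $x$ in this intersection. Then $x$ lies in a non-ideal cell at every level, so $x$ is not in the interior of any ideal tile at any level, i.e. $x\notin\Omega$, so $x\in\Lambda$. (One should be slightly careful: a priori $x$ could lie on the boundary of $K_n$, in a smaller cell; but that smaller cell is a face of $K_n$, and faces of non-ideal cells are non-ideal cells — or more simply, $x\notin\Omega$ because $\Omega$ is the interior of the union of ideal tiles and $x$ lies in $K_n$, a closed non-ideal cell, for every $n$; if $x$ were in $\Omega$ it would have a neighborhood consisting of ideal tiles in some $R^n(X)$, contradicting $x\in K_n$.) This contradicts $\Lambda=\emptyset$. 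Hence $\Lambda_N=\emptyset$ for some $N$; and since ideal tiles subdivide only into ideal tiles, $\Lambda_n\subseteq$ (the pullback of) $\Lambda_N$ forces $\Lambda_n=\emptyset$ for all $n\geq N$ as well. Therefore $\Gamma$ has vertices only in levels $0,\dots,N-1$, finitely many in each, so $\Gamma$ is finite, hence compact.

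The main obstacle is the key step — ruling out an infinite tower of non-ideal cells when $\Lambda=\emptyset$ — and the natural tool is König's lemma applied to the vertical-edge tree, combined with the compactness-and-nesting argument already used in Lemma \ref{GeoLemma}. The one delicate point to get right is the claim that the limit point $x$ obtained from the descending branch genuinely lies in $\Lambda$ rather than $\Omega$; this needs the definition of $\Omega$ as an interior of a union of ideal tiles, together with the fact that $x$ sits inside a closed non-ideal cell at every level. Everything else (finiteness of each $\Gamma_n$, the equivalence of compactness with finiteness for graphs, the easy direction via Lemma \ref{GeoLemma}) is routine.
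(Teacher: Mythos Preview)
Your proof is correct and follows a genuinely different, more elementary route than the paper's. The paper dispatches the theorem in one line by invoking Theorems~\ref{HyperHausdorff} and~\ref{HyperQuotient}: compact geodesic spaces are exactly the Gromov hyperbolic spaces with empty boundary, and Theorem~\ref{HyperQuotient} identifies $\partial\Gamma$ with the canonical quotient $\widehat\Lambda$, which is empty if and only if $\Lambda$ is. You instead argue directly from the combinatorics of $\Gamma$: the forward direction via the infinite ray furnished by Lemma~\ref{GeoLemma}, and the reverse via K\"onig's lemma applied to the locally finite tree of vertical edges together with the nested-compact-cells argument.

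The trade-off: the paper's proof is brief but leans on substantially heavier results---and, if one unpacks the implication $\Lambda=\emptyset\Rightarrow\Gamma$ compact, one still needs to know that the $\Gamma_n$ are eventually empty (essentially your K\"onig step) before Theorem~\ref{HyperHausdorff} can even be applied. Your argument is self-contained modulo Lemma~\ref{GeoLemma}. One small cleanup: once K\"onig's lemma hands you an infinite vertical geodesic ray from $O$, you can simply cite the second statement of Lemma~\ref{GeoLemma} to conclude $\Lambda\neq\emptyset$, rather than arguing directly that the intersection point $x$ lies in $\Lambda$. Your justification there (``if $x$ were in $\Omega$ it would have a neighborhood consisting of ideal tiles in some $R^n(X)$'') is not quite right as stated, since $\Omega$ is the interior of the union of ideal tiles over \emph{all} levels simultaneously, not a single fixed level; appealing to Lemma~\ref{GeoLemma} sidesteps this technicality cleanly.
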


\begin{proof}
This follows from Theorems \ref{HyperHausdorff} and \ref{HyperQuotient} because compact spaces are Gromov hyperbolic, and are characterized among hyperbolic spaces by their Gromov boundaries being empty.
\end{proof}

Thus, the history graph of a subdivision rule is quasi-isometric to a sphere (or to any other compact space) exactly when the limit set is empty. This is the simplest of all cases.

\subsection{Cyclic geometries: $\mathbb{R},\mathbb{S}^2\times \mathbb{R}$}

These geometries are only slightly more complicated than the compact geometries. The simplest group with this geometry is $\mathbb{Z}$, and in fact any group quasi-isometric to $\mathbb{Z}$ contains a finite index copy of $\mathbb{Z}$ \cite{farrell1995lower}.

\textbf{Example}: Similar to the previous section, we can construct examples from points that do not subdivide (in this case, two non-ideal points). Let $R$ be a 0-dimensional subdivision rule with one tile type $A$ that is non-ideal and that subdivides into another $A$ tile. Let $X$ be the $R$-complex consisting of 2 type $A$ tiles. Then $\Gamma(R,X)$ is isomorphic as a graph to the standard Cayley graph of $\mathbb{Z}$.

\begin{theorem}
Let $R$ be a subdivision rule and let $X$ be an $R$-complex. Assume that $\Gamma(R,X)$ is quasi-isometric to a group $G$. Then $\Gamma$ is quasi-isometric to $\mathbb{Z}$ if and only if:
\begin{enumerate}
\item the limit set $\Lambda$ has 2 components, and
\item the diameters of components of $\Lambda_n$ are globally bounded.
\end{enumerate}
\end{theorem}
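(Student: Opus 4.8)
The plan is to show that the two stated conditions are jointly equivalent to $\Gamma(R,X)$ being quasi-isometric to $\mathbb{Z}$, using the end-counting result (Theorem \ref{Quasi3Thm}) together with a growth/two-endedness argument. First I would treat the forward direction. Suppose $\Gamma$ is quasi-isometric to $\mathbb{Z}$. Since the number of ends is a quasi-isometry invariant and $\mathbb{Z}$ has exactly two ends, Theorem \ref{Quasi3Thm} immediately gives that $\Lambda$ has exactly two components, which is condition (1). For condition (2), I would argue that the growth function of $\mathbb{Z}$ is linear, so by Theorem \ref{Quasi1Thm} the counting function $c_X(n)=\sum_{i\le n}|\Lambda_i|$ is equivalent to a linear function; hence the number of cells in each $\Lambda_n$ is globally bounded (a tail of $|\Lambda_n|$ cannot grow or the partial sums would be superlinear — here I would need to be slightly careful and phrase it as: $|\Lambda_n|$ is bounded on average, and then use connectedness of the two components plus the fact that each $\Lambda_n$ has exactly two components for $n$ large to pin down that the diameter of each component of $\Lambda_n$ is bounded). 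Alternatively, and more cleanly, I would invoke Theorem \ref{HyperHausdorff}/\ref{HyperQuotient}: a space quasi-isometric to $\mathbb{Z}$ is Gromov hyperbolic, so $\Gamma$ is $\delta$-hyperbolic, so $R$ is hyperbolic with respect to $X$ and the preimage of each of the two points of $\partial\Gamma=\widehat{\Lambda}$ has combinatorial diameter bounded by $\delta+1$ in each $\Lambda_n$; since for large $n$ all of $\Lambda_n$ maps to these two points (the components of $\Lambda_n$ being eventually in bijection with the components of $\Lambda$), this bounds the diameters of the components of $\Lambda_n$, giving (2).

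For the converse, suppose (1) and (2) hold. From (2) — the diameters of components of $\Lambda_n$ are globally bounded by some constant $D$ — I would first observe that $R$ is automatically hyperbolic with respect to $X$: take $M=D+1$ and any $j\ge1$; the condition $d_n(f_{n,n+j}(x),f_{n,n+j}(y))\ge M>D$ can never hold for $x,y$ of finite distance in $\Gamma_{n+j}$ (they would have to lie in the same component of $\Lambda_{n+j}$, whose image lies in a single component of $\Lambda_n$ of diameter $\le D<M$), so the implication is vacuously true. Hence by Theorem \ref{HyperHausdorff} $\Gamma$ is Gromov hyperbolic, and by Theorem \ref{HyperQuotient} $\partial\Gamma\cong\widehat\Lambda$. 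Next, condition (1) combined with (2) forces $\widehat\Lambda$ to be a two-point space: by the bounded-diameter condition the equivalence relation $\sim$ defining $\widehat\Lambda$ identifies all points in each component of $\Lambda$ (any two points in one component of $\Lambda$ lie in a common component of each $\Lambda_n$, so $d_n(f_n(x),f_n(y))\le D$ for all $n$), and it cannot identify points from different components (a chain of cells witnessing bounded distance would connect the components in every $\Lambda_n$, hence in $\Lambda$ by the nested-compact-connected argument used in Theorem \ref{Quasi3Thm}); so $\widehat\Lambda$ has exactly two points. Thus $\partial\Gamma$ consists of two points. A Gromov hyperbolic geodesic space whose boundary is a two-point set, when it is quasi-isometric to a group (as we are assuming), must be quasi-isometric to $\mathbb{Z}$: a two-ended group is virtually $\mathbb{Z}$ (this is where I would cite the standard structure theorem, e.g. via \cite{farrell1995lower} or Stallings' theorem). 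Here I should double-check that a two-point Gromov boundary really does correspond to exactly two ends of $\Gamma$ — which follows since for a hyperbolic space the ends biject with the quasi-components of the boundary, or more directly from Theorem \ref{Quasi3Thm}: two points in $\widehat\Lambda$ and the identification of $\widehat\Lambda$-classes with components of $\Lambda$ show $\Lambda$ has two components, hence $\Gamma$ has two ends, hence $G$ has two ends.

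I expect the main obstacle to be the passage from "$\widehat\Lambda$ is a two-point space / $\Gamma$ has two ends" to "$\Gamma$ is quasi-isometric to $\mathbb{Z}$." Having two ends alone does not force quasi-isometry to $\mathbb{Z}$ for an arbitrary geodesic metric space (an infinite ladder is a counterexample), so the hypothesis that $\Gamma$ is quasi-isometric to a group $G$ is essential: $G$ is then a finitely generated two-ended group, which is virtually $\mathbb{Z}$ and therefore quasi-isometric to $\mathbb{Z}$. I would make sure the citation for "two-ended finitely generated groups are virtually $\mathbb{Z}$" is in place, and that the eventual stabilization of the component structure of $\Lambda_n$ (needed to transfer the diameter bound on $\widehat\Lambda$-preimages to a diameter bound on components of $\Lambda_n$, and vice versa) is argued carefully — this uses that $\Lambda$ is a nested intersection of the compact sets $\Lambda_n$, so for large $n$ the components of $\Lambda_n$ are in bijection with the (finitely many) components of $\Lambda$. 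Everything else is routine bookkeeping with the projection and transition functions already developed in Section \ref{BackgroundSection}.
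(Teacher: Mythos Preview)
Your proposal is correct and follows essentially the same route as the paper: in both directions you use that $\mathbb{Z}$ (hence $\Gamma$) is Gromov hyperbolic and invoke Theorem~\ref{HyperQuotient} to pass between the two-point boundary and the bounded-diameter condition, with the converse noting that the diameter bound makes the hyperbolicity hypothesis vacuous. If anything you are more careful than the paper, which omits the explicit appeal to the group hypothesis (two-ended finitely generated groups are virtually~$\mathbb{Z}$) when concluding the backward direction.
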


\begin{proof}
$\implies$
The Gromov boundary of $\mathbb{Z}$ consists of two points, so the canonical quotient of $\Lambda$ must be two points, and by Theorem \ref{HyperQuotient} the pre-image of each point (i.e. each of the two components) must have bounded diameter, where the bound does not depend on the level of subdivision.

$\impliedby$
Since the diameter of every component of $\Lambda_n$ is bounded, the subdivision rule $R$ is trivially hyperbolic with respect to $X$, and since the components are preserved in the canonical quotient, the Gromov boundary consists of 2 points. Thus, $\Gamma$ is quasi-isometric to the integers.
\end{proof}

\subsection{Hyperbolic geometries: $\mathbb{H}^2$ and $\mathbb{H}^3$}

By Theorem \ref{HyperQuotient}, if a history graph $\Gamma$ for a subdivision rule $R$ and $R$-complex $X$ is quasi-isometric to a  hyperbolic 2- or 3-manifold group, then $R$ must be hyperbolic with respect to $X$ and the canonical quotient of the limit set will be a circle or a 2-sphere, respectively.

\begin{theorem}
Let $R$ be a subdivision rule and let $X$ be an $R$-complex. Suppose a group $G$ is quasi-isometric to the history graph $\Gamma(R,X)$. Suppose that:
\begin{enumerate}
\item $R$ is hyperbolic with respect to $X$, and
\item  the canonical quotient of the limit set is a circle.
\end{enumerate}
Then the group $G$ is Fuchsian (i.e. it acts geometrically on hyperbolic 2-space). The converse also holds; if $G$ is Fuchsian and is quasi-isometric to a history graph $\Gamma(R,X)$, then $R$ is hyperbolic with respect to $X$ and the canonical quotient is a circle.
\end{theorem}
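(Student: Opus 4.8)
The plan is to run both directions through the boundary machinery already in place and then feed the outcome into the convergence group theorem, which is the only deep external input. The unifying remark is that, by Theorems \ref{HyperHausdorff} and \ref{HyperQuotient}, conditions (1) and (2) together are equivalent to the single assertion ``$\Gamma(R,X)$ is Gromov hyperbolic with Gromov boundary homeomorphic to $S^1$,'' and this assertion is a quasi-isometry invariant of $\Gamma(R,X)$. For the forward implication, assume (1) and (2). By Theorem \ref{HyperHausdorff}, $\Gamma=\Gamma(R,X)$ is Gromov hyperbolic, hence $\delta$-hyperbolic for some $\delta$. Since $G$ is quasi-isometric to $\Gamma$ and Gromov hyperbolicity is a quasi-isometry invariant, $G$ is a word-hyperbolic group, and the quasi-isometry induces a homeomorphism $\partial G\cong\partial\Gamma$ (Proposition 2.20 of \cite{kapovich2002boundaries}). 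Applying Theorem \ref{HyperQuotient} to the $\delta$-hyperbolic graph $\Gamma$ gives $\partial\Gamma\cong\widehat{\Lambda}$, which is a circle by (2), so $\partial G\cong S^1$. At this point I would invoke the convergence group theorem of Tukia, Gabai, and Casson--Jungreis: a word-hyperbolic group acts on its boundary as a uniform convergence group, and a (uniform) convergence group action on $S^1$ is topologically conjugate to the action on $\partial\mathbb{H}^2$ of a cocompact Fuchsian group; consequently a word-hyperbolic group with boundary $S^1$ acts properly discontinuously and cocompactly by isometries on $\mathbb{H}^2$ (the kernel of the action on $S^1$, being the maximal finite normal subgroup of $G$, does not obstruct this). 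This is exactly the assertion that $G$ is Fuchsian in the sense of the statement.

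For the converse, assume $G$ is Fuchsian and quasi-isometric to some history graph $\Gamma(R,X)$. Since $G$ acts geometrically on $\mathbb{H}^2$, the \v{S}varc--Milnor lemma gives that $G$ is quasi-isometric to $\mathbb{H}^2$; composing, $\Gamma(R,X)$ is quasi-isometric to $\mathbb{H}^2$ and hence Gromov hyperbolic. Theorem \ref{HyperQuotient} then yields simultaneously that $R$ is hyperbolic with respect to $X$, which is (1), and that $\widehat{\Lambda}\cong\partial\Gamma\cong\partial\mathbb{H}^2\cong S^1$, which is (2). This closes the equivalence.

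The hard part is entirely external: the passage from ``$\partial G$ is a circle'' to ``$G$ acts geometrically on $\mathbb{H}^2$'' is the deep Tukia--Gabai--Casson--Jungreis theorem, and there is no elementary substitute. Everything internal to the paper is routine bookkeeping: quasi-isometry invariance of hyperbolicity and of the Gromov boundary, plus the two structural theorems already proved. The only minor technical points to record are that $\Gamma(R,X)$ is a proper geodesic metric space --- true because a finite subdivision rule forces each vertex of $\Gamma$ to have boundedly many children and a unique parent, so $\Gamma$ is locally finite --- which is what makes the boundary theory and its quasi-isometry invariance applicable, and that one must pass freely between ``acts geometrically on $\mathbb{H}^2$'' and ``is a cocompact Fuchsian group'' up to the finite normal subgroup acting trivially on $S^1$.
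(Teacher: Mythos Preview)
Your proposal is correct and follows essentially the same approach as the paper: both directions are run through Theorems \ref{HyperHausdorff} and \ref{HyperQuotient} together with quasi-isometry invariance of hyperbolicity and of the Gromov boundary, and the forward implication is finished by invoking the convergence group theorem of Tukia, Gabai, Casson--Jungreis (the paper also cites Freden). Your write-up is somewhat more detailed than the paper's---explicitly naming \v{S}varc--Milnor for the converse and flagging the properness of $\Gamma$ and the finite-kernel subtlety---but the argument is the same.
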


\begin{proof}
By Theorem \ref{HyperHausdorff}, the history graph $\Gamma(R,X)$ is Gromov hyperbolic and by Theorem \ref{HyperQuotient} its space at infinity is a circle. Since $G$ is quasi-isometric to $\Gamma(R,X)$, it must also be hyperbolic and have a circle at infinity. By work of various authors, including Gabai, Tukia, Freden, and Casson-Jungreis \cite{gabai1992convergence,tukia1988homeomorphic,freden1995negatively,casson1994convergence}, $G$ must be (virtually) a hyperbolic 2-manifold group.

The converse holds by Theorem $\ref{HyperQuotient}$.
\end{proof}

The corresponding result does not necessarily hold if the boundary is a 2-sphere.

\begin{theorem}Let $R$ be a subdivision rule and let $X$ be an $R$-complex. Suppose a group $G$ is quasi-isometric to the history graph $\Gamma(R,X)$. Suppose that:
\begin{enumerate}
\item $R$ is hyperbolic with respect to $X$,
\item the canonical quotient of the limit set is a sphere, and
\item $G$ is known to be a manifold group

\end{enumerate}
Then the group $G$ is Kleinian (i.e. it acts geometrically on hyperbolic 3-space).
\end{theorem}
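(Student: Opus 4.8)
The plan is to peel away the subdivision-rule combinatorics and land in the (known) three-manifold case of Cannon's conjecture. Hypothesis (1) together with Theorem \ref{HyperHausdorff} makes the history graph $\Gamma=\Gamma(R,X)$ Gromov hyperbolic, hence $\delta$-hyperbolic for some $\delta$; Theorem \ref{HyperQuotient} then applies and identifies the Gromov boundary $\partial\Gamma$ with the canonical quotient $\widehat{\Lambda}$, which by hypothesis (2) is a $2$-sphere. Since $G$ is quasi-isometric to $\Gamma$, hyperbolicity transfers to $G$, and the induced homeomorphism of boundaries (Proposition 2.20 of \cite{kapovich2002boundaries}) gives $\partial G\cong\partial\Gamma\cong\mathbb{S}^2$. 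At this point $G$ is a word-hyperbolic group with two-sphere boundary; in particular it is infinite (nonempty boundary), one-ended (connected boundary), contains no copy of $\mathbb{Z}^2$ (word-hyperbolic groups never do), and -- by Bestvina--Mess -- is a Poincar\'e duality group of formal dimension $3$. Everything up to here is forced by (1) and (2); hypothesis (3) is precisely what upgrades ``$PD(3)$ group'' to ``honest $3$-manifold group''.

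Next I would invoke hypothesis (3), which in this context means $G=\pi_1(M)$ for a closed $3$-manifold $M$ (the $\mathbb{S}^2$ boundary forces dimension $3$: a closed hyperbolic $n$-manifold group has Gromov boundary $\mathbb{S}^{n-1}$, so e.g.\ closed surface groups, with $\mathbb{S}^1$ boundary, are excluded). Because $G$ is one-ended, the prime decomposition of $M$ is trivial, so $M$ is prime; since $G$ is infinite, $M$ is irreducible, hence aspherical, and therefore $G$ is torsion-free. Because $G$ has no $\mathbb{Z}^2$, the manifold $M$ carries no essential torus, so it is atoroidal. Now apply the geometrization theorem (Thurston in the Haken case, Perelman in general): a closed, aspherical, atoroidal $3$-manifold with infinite -- a fortiori word-hyperbolic -- fundamental group is hyperbolic. (By geometrization such a manifold is either hyperbolic or small Seifert-fibered, and the latter have fundamental groups containing $\mathbb{Z}^2$, so $M$ must be hyperbolic.) Hence $M=\mathbb{H}^3/G$ and $G$ acts properly discontinuously, cocompactly, and isometrically on $\mathbb{H}^3$; that is, $G$ is Kleinian.

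The only genuinely new step is the first paragraph: turning the combinatorial input $(R,X)$, via Theorems \ref{HyperHausdorff} and \ref{HyperQuotient}, into the statement ``$G$ is a hyperbolic group with $\mathbb{S}^2$ boundary.'' Everything after that is imported, and this is also where the real obstruction sits: for an arbitrary word-hyperbolic group with two-sphere boundary, the conclusion that it acts geometrically on $\mathbb{H}^3$ is exactly Cannon's conjecture, which remains open -- hypothesis (3) is indispensable precisely because it lets us substitute geometrization for that conjecture. The only point requiring genuine (but routine) care is the chain of reductions ``$G=\pi_1(M)$ with $M$ closed $\Rightarrow$ $M$ closed, irreducible, aspherical, atoroidal,'' which uses one-endedness and the absence of $\mathbb{Z}^2$ as above; if one additionally wants $M$ orientable one passes to a finite cover, and being Kleinian is a commensurability invariant among these groups, so nothing is lost.
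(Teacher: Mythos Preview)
Your proposal is correct and follows the same route as the paper: use Theorems \ref{HyperHausdorff} and \ref{HyperQuotient} together with quasi-isometry invariance to conclude that $G$ is word-hyperbolic with $\partial G\cong\mathbb{S}^2$, then invoke Geometrization via hypothesis (3). The paper's own proof is a two-sentence sketch that simply cites Perelman; your version spells out the standard reduction chain (one-ended $\Rightarrow$ irreducible, no $\mathbb{Z}^2$ $\Rightarrow$ atoroidal, then Geometrization rules out the Seifert-fibered case), which is exactly what the paper's appeal to Geometrization is implicitly using.
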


\begin{proof}
As before, the hypotheses imply that $G$ is hyperbolic with a 2-sphere at infinity. If the group is known to be a manifold group, the Geometrization Theorem \cite{perelman2002entropy, perelman2003ricci} implies that the group is quasi-isometric to hyperbolic 3-space.
\end{proof}

There are numerous explicit examples of finite subdivision rules on the 2-sphere that represent hyperbolic 3-manifolds, as well as subdivision rules representing hyperbolic knot complements \cite{cannon2001finite, linksubs,CubeSubs,PolySubs}.

\subsection{Euclidean geometries: $\mathbb{E}^2$ and $\mathbb{E}^3$}\label{EuclideanSection}

\begin{figure}
\begin{center}
\scalebox{.4}{\includegraphics{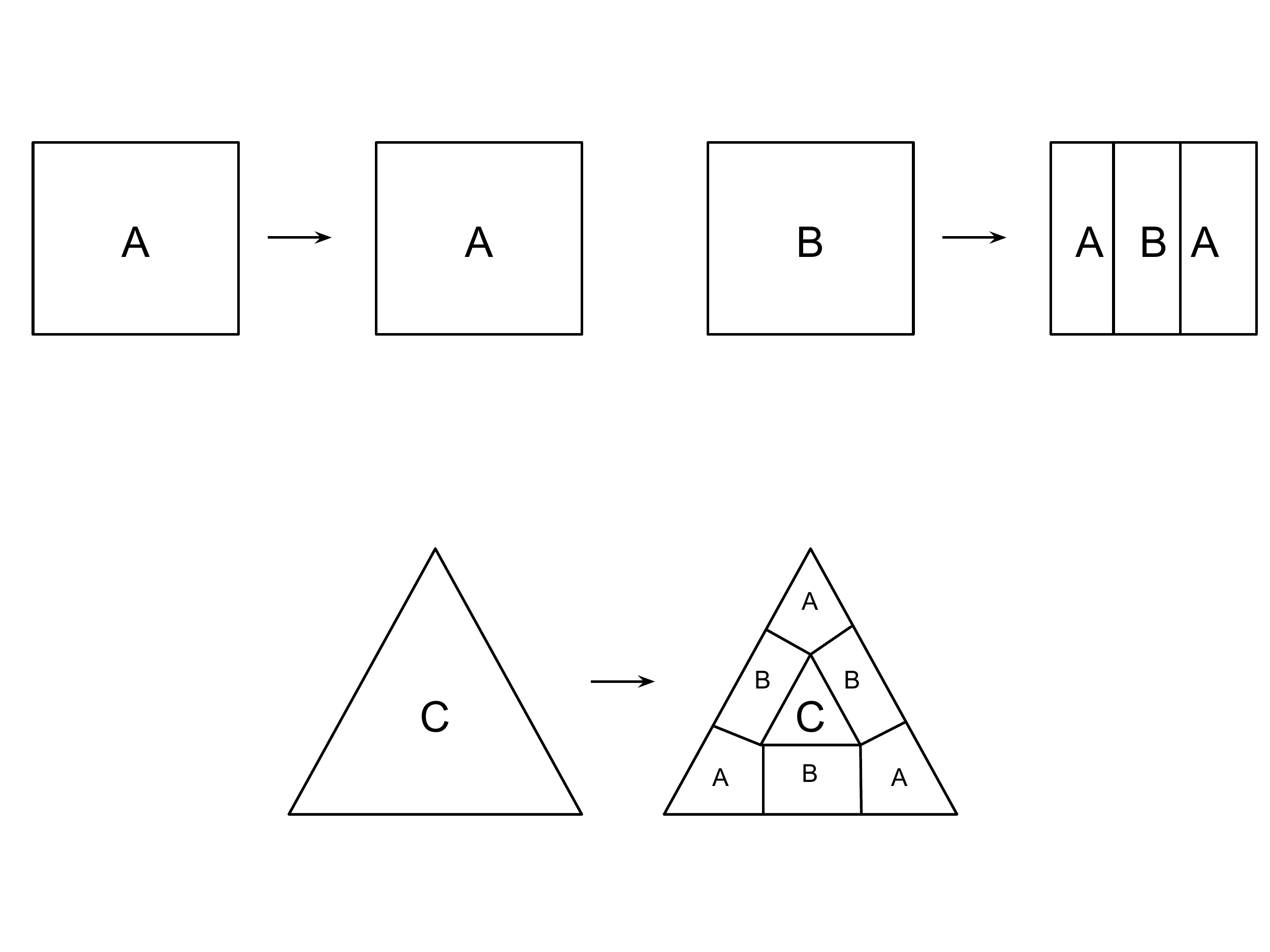}}
\caption{The tile types of a subdivision rule $R$. The actual subdivision complex $S_R$ is created by pasting all outer edges in this figure together that subdvide the same way. With the $R$-complex $X$ shown in Figure \ref{TorusS0}, the history graph $\Gamma(R,X)$ is quasi-isometric to Euclidean space.}\label{TorusSubdivision}
\end{center}
\end{figure}

\begin{figure}
\begin{center}
\scalebox{.4}{\includegraphics{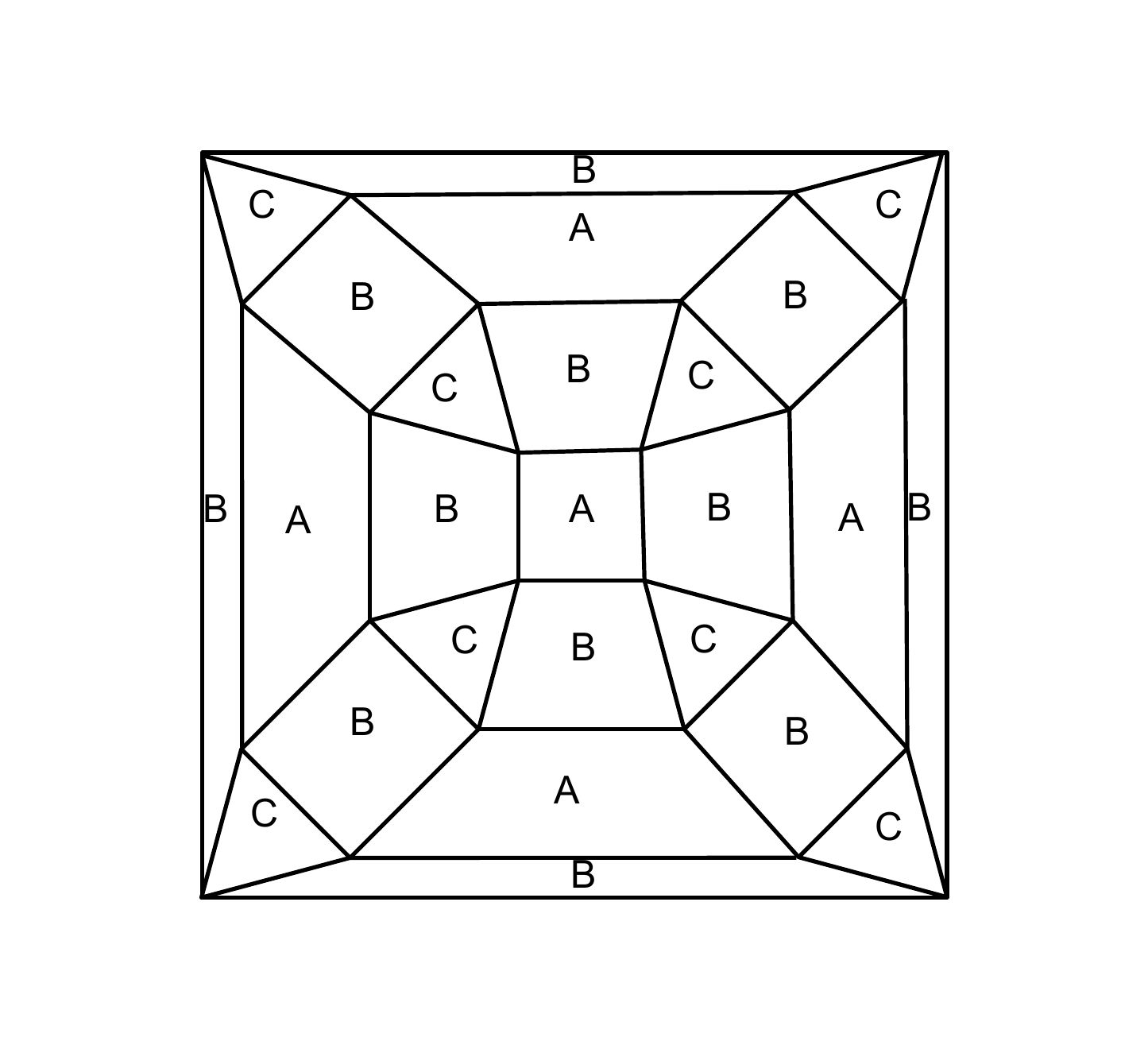}} \caption{The $R$-complex $X$ that we start with for the subdivision rule $R$ in Figure \ref{TorusSubdivision}. This is a Schlegel diagram of a polyhedron; the outside face is an $A$ face.}
\label{TorusS0}
\end{center}
\end{figure}

\begin{figure}
\begin{center}
\scalebox{.4}{\includegraphics{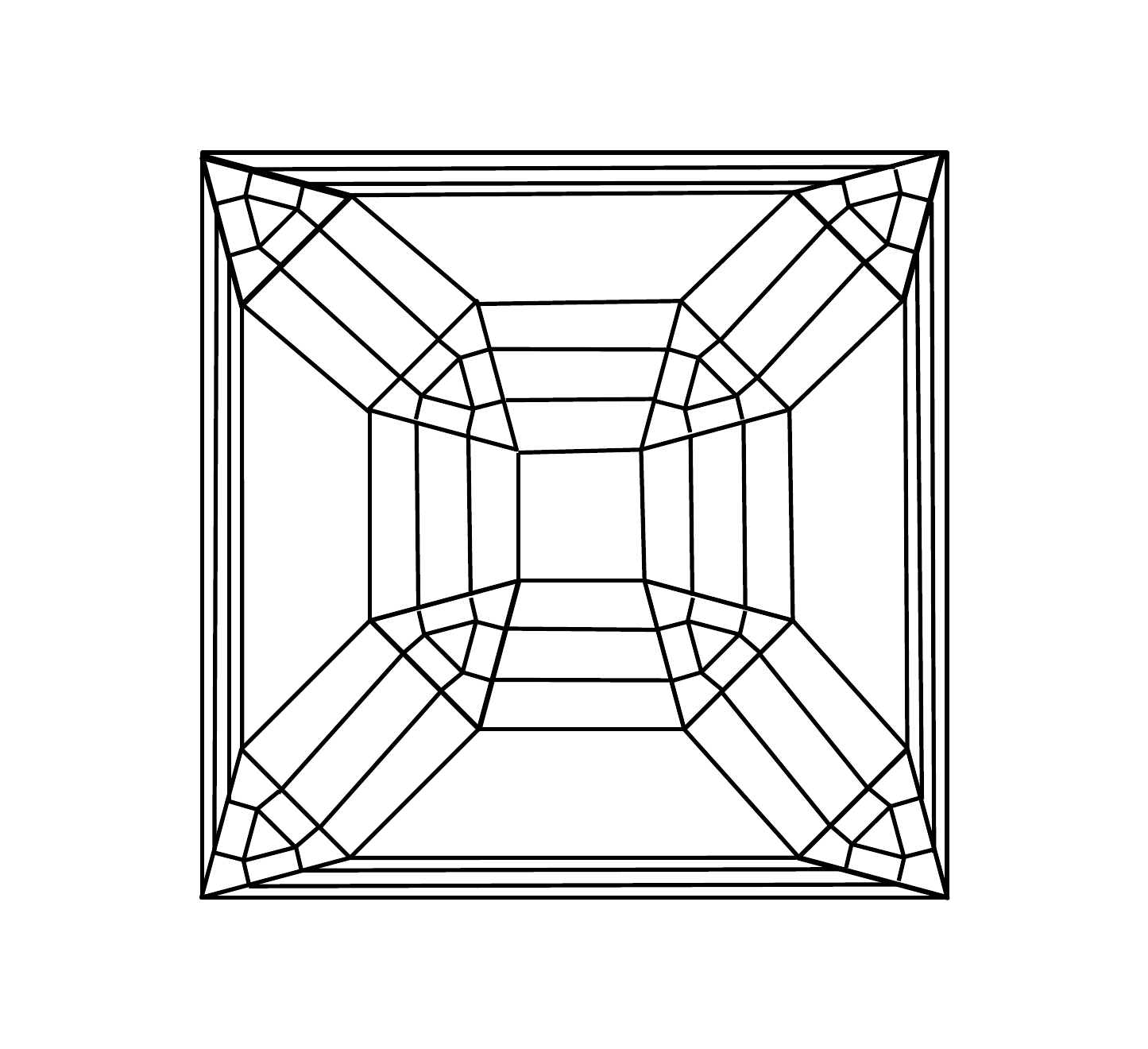}} \caption{The first subdivision $R(X)$ of the $R$-complex $X$ in Figure \ref{TorusS0}.}
\label{TorusS1}
\end{center}
\end{figure}

We can use Theorem \ref{Quasi1Thm} to characterize those subdivision rules and complexes whose history graphs are quasi-isometric to a Euclidean space of dimension 2 or 3.

We recall several preliminary definitions.

\begin{definition}
Two groups $G_1,G_2$ are \textbf{commensurable} if they contain finite index subgroups $H_1\subseteq G_1$, $H_2\subseteq G_2$ such that $H_1$ and $H_2$ are isomorphic. It is easy to show that this is an equivalence relation. The equivalence classes under this relation are called \textbf{commensurability classes}.
\end{definition}

If two groups are commensurable, then their history graphs are quasi-isometric (see Section 1 of \cite{farb2000problems}).

\begin{definition}
A finitely generated group is said to be \textbf{virtually nilpotent} if it contains a finite-index subgroup which is nilpotent. The term \textbf{nilpotent-by-free} is also used for virtually nilpotent groups (see, for instance, the introduction to \cite{alperin1999solvable}).
\end{definition}

Gromov's theorem on groups of polynomial growth \cite{gromov1981groups} says that every group of polynomial growth is virtually nilpotent.

For groups with growth functions of degree 2 or 3, Gromov's theorem can be further refined (Proposition 4.8a of \cite{mann2012groups}):

\begin{theorem}\label{MannTheorem}
Within the class of nilpotent-by-finite groups we have that all groups of quadratic growth lie in the same commensurability class, and all groups of cubic growth lie in the same commensurability class.
\end{theorem}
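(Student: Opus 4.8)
The statement is purely group-theoretic: it concerns the commensurability classification of nilpotent-by-finite groups of low polynomial growth, and makes no reference to a subdivision rule. My plan is to combine the Bass--Guivarc'h growth formula with a short case analysis. The guiding principle is that the degree of polynomial growth is a commensurability invariant (already used implicitly via Gromov's theorem \cite{gromov1981groups}), and that for a nilpotent-by-finite group this degree is computed explicitly from the lower central series of a nilpotent finite-index subgroup.

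\textbf{Step 1 (reduce to a torsion-free witness).} Given a nilpotent-by-finite group $G$ of quadratic (resp.\ cubic) growth, pass to a nilpotent subgroup $N\le G$ of finite index; $N$ has the same growth degree. The torsion of $N$ is a finite normal subgroup $\tau$, and $N/\tau$ is a finitely generated torsion-free nilpotent group of the same growth degree. It suffices to identify $N/\tau$ up to isomorphism: once $N/\tau\cong\mathbb{Z}^d$ for the relevant $d\in\{2,3\}$, a standard argument shows $N$, and hence $G$, is virtually $\mathbb{Z}^d$; a finite-index subgroup of a virtually-$\mathbb{Z}^d$ group contains a finite-index copy of $\mathbb{Z}^d$, so $G$ is commensurable to $\mathbb{Z}^d$, and transitivity of commensurability (which the excerpt already grants is an equivalence relation) places every such $G$ in the single commensurability class of $\mathbb{Z}^d$.

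\textbf{Step 2 (Bass--Guivarc'h and the key inequality).} For a finitely generated nilpotent group $M$ write $d_i=\mathrm{rank}_{\mathbb{Q}}\bigl(\gamma_i(M)/\gamma_{i+1}(M)\bigr)$, where $\gamma_i$ is the lower central series. The Bass--Guivarc'h formula gives the growth degree $d(M)=\sum_{i\ge 1} i\,d_i$, while the Hirsch length is $h(M)=\sum_{i\ge 1} d_i\le d(M)$, with equality precisely when $M$ is virtually abelian. The crucial observation is that a nonabelian torsion-free finitely generated nilpotent group $M$ has $d(M)\ge 4$: its abelianization has torsion-free rank at least $2$ (rank $\le 1$ would force $\gamma_2/\gamma_3$, and then all higher quotients, to be finite, hence $\gamma_2$ finite, hence trivial since $M$ is torsion-free — contradicting nonabelianness), so $d_1\ge 2$; and $\gamma_2(M)\ne 1$ is infinite, contributing at least $2\cdot 1$ more. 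Then in Step 1, if $d(N/\tau)=2$ the group $N/\tau$ must be abelian and torsion-free of growth degree $2$, hence $\cong\mathbb{Z}^2$; if $d(N/\tau)=3$ it is again abelian (a nonabelian witness would have degree $\ge 4$), hence $\cong\mathbb{Z}^3$. This closes the argument.

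The main obstacle is assembling the input theorems correctly rather than any delicate computation: the proof rests on Gromov's polynomial-growth theorem, the Bass--Guivarc'h degree formula, and the standard facts that finitely generated nilpotent groups are virtually torsion-free and that virtually-$\mathbb{Z}^d$ groups are commensurable to $\mathbb{Z}^d$. With those in hand, the only genuine content is the bound $d(M)\ge 4$ for nonabelian torsion-free nilpotent $M$, which collapses both the quadratic-growth and cubic-growth cases to the single groups $\mathbb{Z}^2$ and $\mathbb{Z}^3$.
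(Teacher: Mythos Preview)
Your argument is correct. The reduction to a torsion-free nilpotent witness via the torsion subgroup is standard, and your key inequality $d(M)\ge 4$ for nonabelian torsion-free finitely generated nilpotent $M$ is sound: the surjection $\Lambda^2(M/\gamma_2)\otimes\mathbb{Q}\twoheadrightarrow(\gamma_2/\gamma_3)\otimes\mathbb{Q}$ (and its iterates down the series) forces $d_1\ge 2$ whenever some $d_i$ with $i\ge 2$ is positive, and then $d(M)\ge 2+2$ as you say. This pins the quadratic and cubic cases to $\mathbb{Z}^2$ and $\mathbb{Z}^3$, and commensurability follows.

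However, there is nothing to compare against: the paper does not prove this statement. It is quoted verbatim as Proposition~4.8a of \cite{mann2012groups} and used as a black box in the proof of Theorem~\ref{EuclideanTheorem}. So your write-up is not an alternative to the paper's proof but rather a self-contained substitute for an external citation. That is a perfectly reasonable thing to include, but you should present it as such---either as a proof supplied for completeness, or simply retain the citation as the paper does.
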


Combining these theorems with Theorem \ref{Quasi1Thm}, we have the following:

\begin{theorem}\label{EuclideanTheorem}
Let $R$ be a subdivision rule, and let $X$ be a finite $R$-complex. Assume that $\Gamma(R,X)$ is quasi-isometric to a Cayley graph of a group $G$. Then:
\begin{enumerate}
\item the counting function $c_X$ is a quadratic polynomial if and only if $\Gamma(R,X)$ is quasi-isometric to $\mathbb{E}^2$, the Euclidean plane, and
\item the counting function $c_X$ is a cubic polynomial if and only if $\Gamma(R,X)$ is quasi-isometric to $\mathbb{E}^3$, Euclidean space.
\end{enumerate}
\end{theorem}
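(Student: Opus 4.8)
The plan is to assemble the theorem from three ingredients already available: Theorem~\ref{Quasi1Thm}, which says the growth function of $G$ is equivalent to the counting function $c_X$; Gromov's polynomial growth theorem, which forces $G$ to be virtually nilpotent once $c_X$ is a polynomial; and Theorem~\ref{MannTheorem}, which collapses all virtually nilpotent groups of quadratic (resp.\ cubic) growth into a single commensurability class. Since $\mathbb{Z}^2$ has quadratic growth and $\mathbb{Z}^3$ has cubic growth, and commensurable groups have quasi-isometric history graphs (equivalently, are quasi-isometric), the commensurability class of quadratic-growth nilpotent-by-finite groups is exactly the quasi-isometry class of $\mathbb{E}^2$, and likewise for cubic growth and $\mathbb{E}^3$.

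Concretely, I would argue each biconditional in two directions. For the forward direction of (1): assume $c_X$ is a quadratic polynomial. By Theorem~\ref{Quasi1Thm} the growth function of $G$ is equivalent to $c_X$, hence $G$ has polynomial growth of degree $2$. Gromov's theorem gives that $G$ is virtually nilpotent, i.e.\ nilpotent-by-finite, and it has quadratic growth, so Theorem~\ref{MannTheorem} puts $G$ in the same commensurability class as $\mathbb{Z}^2$. Commensurable groups are quasi-isometric, so $G$ is quasi-isometric to $\mathbb{Z}^2$, which is quasi-isometric to $\mathbb{E}^2$; composing with the given quasi-isometry $\Gamma(R,X)\simeq G$ yields $\Gamma(R,X)\simeq \mathbb{E}^2$. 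For the reverse direction: if $\Gamma(R,X)$ is quasi-isometric to $\mathbb{E}^2$, then $G$ is quasi-isometric to $\mathbb{E}^2$, hence (growth being a quasi-isometry invariant, via the degree of the growth function, cf.\ Lemma~12.1 of \cite{drutu2011lectures}) $G$ has quadratic growth, so by Theorem~\ref{Quasi1Thm} again $c_X$ is equivalent to a quadratic function; one then notes $c_X$ is literally a sum of cell counts and, being a partial-sum sequence of a space of polynomial growth, is a genuine polynomial of degree $2$. The argument for (3)/(2) with $\mathbb{E}^3$ and cubic growth is verbatim the same with ``quadratic'' replaced by ``cubic'' and $\mathbb{Z}^2$ by $\mathbb{Z}^3$.

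The only real subtlety — and the step I expect to be the main obstacle to state cleanly — is the passage between ``$c_X$ is equivalent to a polynomial of degree $d$'' (which is what growth invariance and Theorem~\ref{Quasi1Thm} directly give) and ``$c_X$ \emph{is} a polynomial of degree $d$'' (the literal hypothesis/conclusion phrased in the theorem). Equivalence of growth functions only controls the degree, not the exact form, so in the forward direction one needs $c_X$ actually polynomial as a hypothesis (which it is), and in the reverse direction one must argue that the counting function of a finite $R$-complex with polynomially-growing history graph is eventually polynomial; this can be handled by observing that $c_X(n)$ is determined by the finitely many tile-type transition data, so it satisfies an eventually-linear recursion and is therefore eventually a quasi-polynomial whose degree equals the growth degree, and then the ``if and only if'' is read at the level of degree. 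I would make this precise with a short remark rather than a computation, since the combinatorial content (finitely many tile types $\Rightarrow$ $c_X$ eventually satisfies a linear recurrence) is standard for finite subdivision rules, and the degree is all that the quasi-isometry classification cares about.
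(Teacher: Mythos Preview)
Your proposal is correct and follows essentially the same route as the paper: Theorem~\ref{Quasi1Thm} to transfer growth, Gromov's theorem to get virtual nilpotence, then Theorem~\ref{MannTheorem} to land in the commensurability class of $\mathbb{Z}^2$ or $\mathbb{Z}^3$. You are in fact more careful than the paper on the converse---the paper simply writes ``the converse holds by Theorem~\ref{Quasi1Thm},'' which only yields that $c_X$ is \emph{equivalent} to a quadratic (resp.\ cubic), not that it literally is one; your remark about reading the biconditional at the level of degree (and the linear-recurrence justification from finitely many tile types) is the honest way to close that gap.
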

\begin{proof}
Assume that $\Gamma(R,X)$ is quasi-isometric to a Cayley graph of a group $G$, and that $c_X$ is a quadratic polynomial. By Theorem \ref{Quasi1Thm}, the group $G$ has quadratic growth. Then Gromov's theorem on groups of polynomial growth implies that $G$ is virtually nilpotent. Furthermore, Theorem \ref{MannTheorem} implies that $G$ is commensurable to $\mathbb{Z}^2$, which also has quadratic growth. Thus, every Cayley graph of $G$ is quasi-isometric to $\mathbb{Z}^2$, and $\Gamma(R,X)$ is quasi-isometric to $\mathbb{E}^2$. The converse holds by Theorem \ref{Quasi1Thm}.

The proof is essentially the same in the cubic case.
\end{proof}

An example of a subdivision rule $R$ and an $R$-complex $X$ whose history graph is quasi-isometric to 3-dimensional Euclidean space is shown in Figures \ref{TorusSubdivision}-\ref{TorusS1}.

\subsection{Geometries without subdivision rules: Nil and Sol}

In Section \ref{Combable}, we showed that all history graphs are combable, and therefore any groups quasi-isometric to them satisfy a quadratic isoperimetric inequality. It is known that a group with Nil geometry has a cubic isoperimetric function and that a group with Sol geometry has an exponential isoperimetric function (Example 8.1.1 and Theorem 8.1.3, respectively, of \cite{epstein1992word}). Thus, we have the following:

\begin{cor}\label{NilSol}
A history graph of a finite subdivision rule cannot be quasi-isometric to Nil geometry or Sol geometry.
\end{cor}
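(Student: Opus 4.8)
The plan is to derive a contradiction from the combability of history graphs established in Theorem \ref{CombableThm}. First I would note that both Nil and Sol geometries admit cocompact lattices: the integer Heisenberg group acts geometrically on Nil, and the fundamental group of a compact Sol-manifold (the mapping torus of an Anosov automorphism of the $2$-torus) acts geometrically on Sol. A group acting geometrically on a space is quasi-isometric to that space, so each of these lattices $G$ is quasi-isometric to the model geometry it acts on. Consequently, if a history graph $\Gamma(R,X)$ were quasi-isometric to Nil (respectively Sol), then by transitivity of quasi-isometry it would be quasi-isometric to one of these lattices $G$, and hence $G$ would be a finitely generated group quasi-isometric to a history graph.

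Next I would apply Theorem \ref{CombableThm} directly: every group quasi-isometric to a history graph is combable, and therefore satisfies a quadratic isoperimetric inequality. So the lattice $G$ above would have an isoperimetric function bounded above by a quadratic polynomial. On the other hand, it is classical (Example 8.1.1 and Theorem 8.1.3 of \cite{epstein1992word}) that a group with Nil geometry has a cubic isoperimetric function, while a group with Sol geometry has an exponential isoperimetric function; and since the growth type of the isoperimetric function is a quasi-isometry invariant (apart from the constant/linear dichotomy, which is irrelevant here), neither a cubic nor an exponential function can be dominated by a quadratic. This contradiction shows that no history graph of a finite subdivision rule can be quasi-isometric to Nil geometry or to Sol geometry.

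The argument is short, and the only point requiring a little care is the passage from ``quasi-isometric to the model geometry'' to ``quasi-isometric to a group'': one must invoke the existence of a cocompact lattice in each of Nil and Sol, together with the quasi-isometry invariance of the isoperimetric growth type, in order to bring the conclusion of Theorem \ref{CombableThm}---which is stated for groups---to bear. I expect this bookkeeping step to be the main (and very mild) obstacle; every other ingredient is immediate from results already proved in the earlier sections.
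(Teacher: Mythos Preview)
Your argument is correct and is essentially the same as the paper's: both invoke Theorem \ref{CombableThm} to conclude that any group quasi-isometric to a history graph has a quadratic isoperimetric inequality, and then appeal to the cubic (Nil) and exponential (Sol) isoperimetric functions from \cite{epstein1992word} for the contradiction. Your extra step of explicitly passing through a cocompact lattice makes precise something the paper leaves implicit, but the underlying reasoning is identical.
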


\subsection{$\mathbb{H}^2\times \mathbb{R}$ and $\widetilde{SL_2}(\mathbb{R})$ geometries}
\label{H2RSection}

\begin{figure}
\begin{center}
\scalebox{.4}{\includegraphics{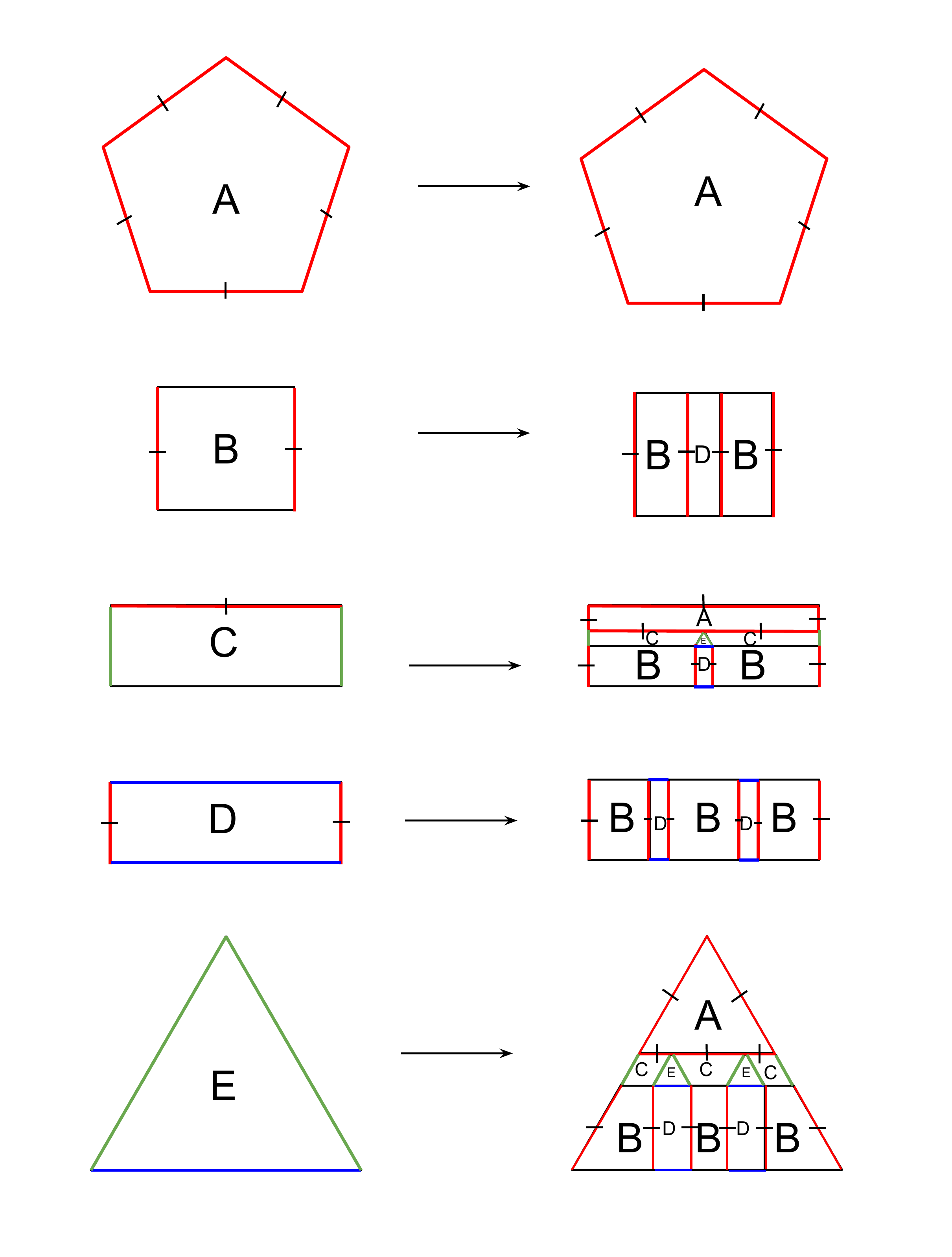}} \caption{A subdivision rule $Q$. With the $Q$-complex shown in Figure \ref{ProductSubsComplex}, we obtain a history graph with $\mathbb{H}^2\times \mathbb{R}$ geometry. The colors indicate various edge types. All green edges are oriented from top to bottom; all blue and black edges are oriented from left to right. Because the red edges are oriented in many different directions, we place a vertex (indicated by a black line) in the midpoint of each red edge, and orient each of the two new edges towards this midpoint. This is similar to the `barycenter trick' described on p.12 of \cite{cannon2001finite}.}
\label{ByHandProductSubs}
\end{center}
\end{figure}

\begin{figure}
\begin{center}
\scalebox{.2}{\includegraphics{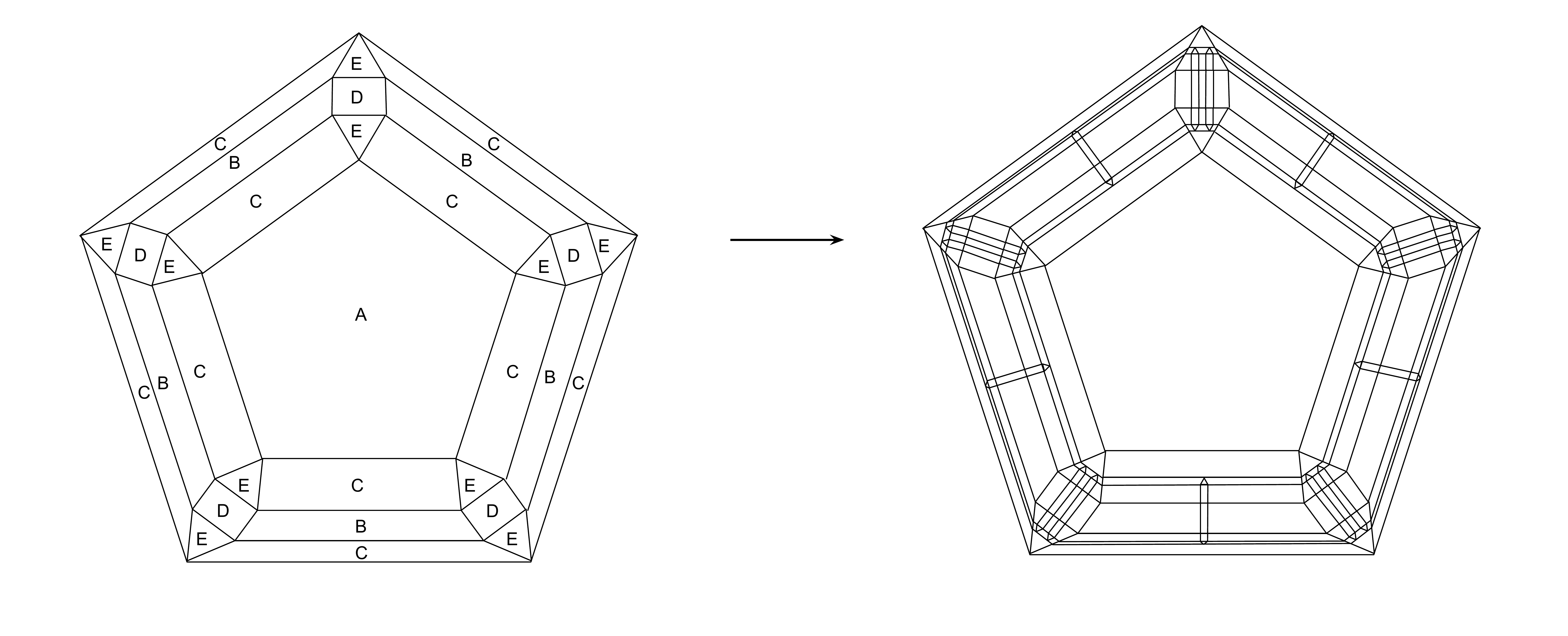}} \caption{A $Q$-complex for the subdivision rule $Q$ in Figure \ref{ByHandProductSubs}, together with its first subdivision. These are Schlegel diagrams of polyhedra; the outside face is an $A$ face. We omit the colors on the edges and the midpoints used in orienting the edges in Figure \ref{ByHandProductSubs}.}
\label{ProductSubsComplex}
\end{center}
\end{figure}

In this section, we study the product geometry $\mathbb{H}^2 \times \mathbb{R}$ and its sister geometry $\widetilde{SL_2}(\mathbb{R})$, which are quasi-isometric. We do not have a general  characterization for this pair of geometries. However, we can distinguish them from the other geometries.

\begin{theorem}
If $\Gamma(R,X)$ is quasi-isometric to $\mathbb{H}^2 \times \mathbb{R}$, then:
\begin{enumerate}
\item\label{growthh2r} $\Gamma(R,X)$ has a growth function that grows exponentially.
\item \label{hyperh2r} $R$ is not hyperbolic with respect to $X$.
\end{enumerate}
\end{theorem}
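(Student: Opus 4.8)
The plan is to prove each part by transporting known quasi-isometry invariants of $\mathbb{H}^2 \times \mathbb{R}$ to $\Gamma(R,X)$ and then invoking the earlier theorems of this paper. For part \eqref{growthh2r}, I would first recall that $\mathbb{H}^2 \times \mathbb{R}$ is quasi-isometric to the Cayley graph of a cocompact lattice $G$ in $\mathrm{Isom}(\mathbb{H}^2 \times \mathbb{R})$ (equivalently, a uniform lattice in $\mathbb{H}^2 \times \mathbb{R}$, e.g. $\Gamma_0 \times \mathbb{Z}$ for a cocompact Fuchsian group $\Gamma_0$). Such a group contains a quasi-isometrically embedded copy of $\mathbb{H}^2$ (the $\mathbb{H}^2$ factor), so its growth is at least that of $\mathbb{H}^2$, which is exponential; hence $G$ has exponential growth. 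Since growth type is a quasi-isometry invariant (Lemma 12.1 of \cite{drutu2011lectures}) and, by hypothesis, $\Gamma(R,X)$ is quasi-isometric to $\mathbb{H}^2\times\mathbb{R}$ and therefore to a Cayley graph of $G$, the growth function of $\Gamma(R,X)$ is equivalent to that of $G$ and so is exponential. (If one prefers to stay purely within metric-space language, one can argue directly: $\mathbb{H}^2\times\mathbb{R}$ contains isometrically embedded copies of $\mathbb{H}^2$, and balls of radius $n$ in $\mathbb{H}^2\times\mathbb{R}$ have volume growing exponentially, so its growth function is exponential; then transport by the quasi-isometry.)

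For part \eqref{hyperh2r}, I would argue by contradiction using Theorem \ref{HyperHausdorff}. Suppose $R$ is hyperbolic with respect to $X$. Then by Theorem \ref{HyperHausdorff} the history graph $\Gamma(R,X)$ is Gromov hyperbolic. But Gromov hyperbolicity is a quasi-isometry invariant (\cite{gromov1987hyperbolic}), so $\mathbb{H}^2\times\mathbb{R}$ would be Gromov hyperbolic. This is false: $\mathbb{H}^2\times\mathbb{R}$ contains a quasi-isometrically (in fact isometrically) embedded copy of $\mathbb{E}^2$ — take a geodesic line $\ell$ in $\mathbb{H}^2$ and consider $\ell \times \mathbb{R}$, a flat plane — and a Gromov hyperbolic space cannot contain a quasi-isometrically embedded Euclidean plane, since quasi-isometric embeddings of geodesic spaces into hyperbolic spaces have hyperbolic image and $\mathbb{E}^2$ is not hyperbolic (geodesic triangles in $\mathbb{E}^2$ are not uniformly thin). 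This contradiction shows $R$ is not hyperbolic with respect to $X$.

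An alternative route for part \eqref{hyperh2r}, which I would mention as a remark, is to combine Theorem \ref{HyperQuotient} with an end/connectivity count or with the isoperimetric inequality: $\mathbb{H}^2\times\mathbb{R}$ is one-ended and has an exponential lower bound on a "cone-type" invariant incompatible with the canonical quotient being a sphere of the right dimension, but the flat-plane obstruction above is cleanest, so that is what I would use as the main argument.

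The main obstacle is purely expository: one must make sure the stated facts about $\mathbb{H}^2\times\mathbb{R}$ — that it is quasi-isometric to a group Cayley graph, that it has exponential growth, and that it is not Gromov hyperbolic because it contains a flat — are cited or justified at the level of rigor appropriate to the paper. None of these is deep, but the cleanest write-up pins down exactly which standard reference supplies "hyperbolic spaces contain no quasi-isometrically embedded flats" (this follows, for instance, from the stability of quasigeodesics in hyperbolic spaces, \cite{gromov1987hyperbolic}, together with the fact that a quasi-isometric embedding of $\mathbb{E}^2$ would force $\mathbb{E}^2$ itself to be hyperbolic). Once those citations are in place, both parts are short.
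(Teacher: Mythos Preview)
Your proposal is correct and follows essentially the same approach as the paper: the paper's own proof is the two-line ``Property \ref{growthh2r} holds by Theorem \ref{Quasi1Thm}. Property \ref{hyperh2r} holds by Theorem \ref{HyperHausdorff},'' which is precisely your argument via quasi-isometry invariance of growth type and the contrapositive of Theorem \ref{HyperHausdorff}. You simply supply the background facts (that $\mathbb{H}^2\times\mathbb{R}$ has exponential growth and contains a flat, hence is not Gromov hyperbolic) that the paper leaves implicit.
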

\begin{proof}
Property \ref{growthh2r} holds by Theorem \ref{Quasi1Thm}. Property \ref{hyperh2r} holds by Theorem \ref{HyperHausdorff}.
\end{proof}

\begin{theorem}
If $\Gamma(R,X)$ is quasi-isometric to a model geometry of dimension less than 4, then it is quasi-isometric to $\mathbb{H}^2\times \mathbb{R}$ if it has exponential growth and is not hyperbolic.
\end{theorem}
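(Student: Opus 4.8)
The plan is to run through the finite list of Thurston model geometries of dimension at most $3$ and eliminate all but $\mathbb{H}^2\times\mathbb{R}$ and $\widetilde{SL_2}(\mathbb{R})$, using the two hypotheses together with results already established.

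First I would recall the classification of model geometries of dimension $\leq 3$: in dimensions $0$ and $1$, a point and $\mathbb{E}^1$; in dimension $2$, $\mathbb{S}^2$, $\mathbb{E}^2$, $\mathbb{H}^2$; in dimension $3$, $\mathbb{S}^3$, $\mathbb{E}^3$, $\mathbb{H}^3$, $\mathbb{S}^2\times\mathbb{R}$, $\mathbb{H}^2\times\mathbb{R}$, $\widetilde{SL_2}(\mathbb{R})$, Nil, and Sol. Next I would record the growth type of each: the spherical geometries (and the point) have bounded growth; $\mathbb{E}^1$ and $\mathbb{S}^2\times\mathbb{R}$ have linear growth; $\mathbb{E}^2$ quadratic; $\mathbb{E}^3$ cubic; Nil polynomial growth of degree $4$; and $\mathbb{H}^2$, $\mathbb{H}^3$, $\mathbb{H}^2\times\mathbb{R}$, $\widetilde{SL_2}(\mathbb{R})$ and Sol all have exponential growth. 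Since the degree of growth is a quasi-isometry invariant (Lemma 12.1 of \cite{drutu2011lectures}) and $\Gamma(R,X)$ is assumed to have exponential growth, this alone restricts the possible geometries to $\mathbb{H}^2$, $\mathbb{H}^3$, $\mathbb{H}^2\times\mathbb{R}$, $\widetilde{SL_2}(\mathbb{R})$ and Sol.

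Now I would invoke the other hypothesis. The geometries $\mathbb{H}^2$ and $\mathbb{H}^3$ are Gromov hyperbolic, and Gromov hyperbolicity is a quasi-isometry invariant, so the assumption that $\Gamma(R,X)$ is not hyperbolic rules these two out. Sol is excluded by Corollary \ref{NilSol}, which says that the history graph of a finite subdivision rule cannot be quasi-isometric to Sol geometry. This leaves only $\mathbb{H}^2\times\mathbb{R}$ and $\widetilde{SL_2}(\mathbb{R})$, which are quasi-isometric to each other (as recalled at the start of Section \ref{H2RSection}). Hence $\Gamma(R,X)$ is quasi-isometric to $\mathbb{H}^2\times\mathbb{R}$, as claimed.

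The argument is essentially bookkeeping layered on top of Theorem \ref{HyperHausdorff}, Corollary \ref{NilSol}, and the quasi-isometry invariance of growth. The only points that require a little care are making sure the enumeration of $\leq 3$-dimensional geometries and their growth functions is complete and correctly attributed, and noting that $\mathbb{H}^2$ and $\mathbb{H}^3$ are the only Gromov-hyperbolic members of the exponential-growth sublist (Sol is not Gromov hyperbolic, but it is independently removed by Corollary \ref{NilSol}, so no ambiguity arises). I do not expect any genuine obstacle here.
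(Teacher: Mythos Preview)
Your proposal is correct and follows essentially the same elimination argument as the paper: use exponential growth to cut the list down to the hyperbolic geometries, $\mathbb{H}^2\times\mathbb{R}$ (together with $\widetilde{SL_2}(\mathbb{R})$), and Sol; then discard $\mathbb{H}^2$ and $\mathbb{H}^3$ by non-hyperbolicity and Sol by Corollary~\ref{NilSol}. The only difference is cosmetic---you spell out the full list of geometries and explicitly treat $\widetilde{SL_2}(\mathbb{R})$, whereas the paper silently absorbs it into $\mathbb{H}^2\times\mathbb{R}$ via the quasi-isometry noted at the start of Section~\ref{H2RSection}.
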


\begin{proof}
The growth being exponential rules out all geometries except $\mathbb{H}^2$, $\mathbb{H}^3$, $\mathbb{H}^2\times \mathbb{R}$, and Sol. We know that Sol geometries cannot be modeled by subdivision rules by Corollary \ref{NilSol}, and we know that $\mathbb{H}^n$ is hyperbolic. Thus, the only remaining geometry is $\mathbb{H}^2\times \mathbb{R}$.
\end{proof}

In Figures \ref{ByHandProductSubs} and \ref{ProductSubsComplex}, we show a subdivision rule $Q$ and $Q$-complex $X$ whose history graph $\Gamma(Q,X)$ is quasi-isometric to $\mathbb{H}^2 \times \mathbb{R}$.
\section{Future Work}

We hope to find more explicit characterizations of the product geometries, as well as characterizing subdivision rules for relatively hyperbolic groups.

\bibliographystyle{amsplain}
\bibliography{Asympbib}
\end{document}